\newcommand{\bbN}{{\mathbb{N}}}
\newcommand{\bbR}{{\mathbb{R}}}
\newcommand{\bbZ}{{\mathbb{Z}}}
\newcommand{\bbK}{{\mathbb{K}}}
\newcommand{\bbC}{{\mathbb{C}}}
\renewcommand{\Re}{\mathop{\mathrm{Re}}}
\DeclareMathOperator{\Var}{Var}
\newcommand{\no}{\nonumber}
\newcommand{\ti}{\tilde  }
\newcommand{\beq}{\begin{equation}}
\newcommand{\eeq}{\end{equation}}
\newcommand{\ba}{\begin{align}}
\newcommand{\ea}{\end{align}}
\newcommand{\loza}{ --++(-2,0)--++(0,2)--++(2,0)--++(0,-2)
[fill=gray!30!white] }
\newcommand{\lozb}{--++(-2,2)--++(0,-2)--++(2,-2)--++(0,2) 
[fill=gray!60!white]}
\newcommand{\lozc}{--++(-2,2)--++(-2,0)--++(2,-2)--++(2,0)
}
\numberwithin{equation}{section}
\newtheorem{theorem}{Theorem}[section]
\newtheorem{proposition}[theorem]{Proposition}
\newtheorem{lemma}[theorem]{Lemma}
\theoremstyle{definition}
\newtheorem{definition}[theorem]{Definition}
\newtheorem{Remark}[theorem]{Remark}
\newenvironment{remark*}{\begin{Remark}\rm} { \end{Remark}}
\newtheorem{remark}[theorem]{Remark}
\title{Nonintersecting paths with a staircase initial condition}
\author{Jonathan Breuer\footnote{ Einstein Institute of Mathematics, The Hebrew University of Jerusalem,
Jerusalem 91904, Israel. E-mail: jbreuer@math.huji.ac.il. Supported in part by The Israel Science Foundation (grant no.\ 1105/10)} \and 
Maurice Duits\footnote{Department of Mathematics 253-37, California Institute of Technology, 1200 E. California Blvd, CA 91125. Current address: Department of Mathematics, Royal Institute of Technology (KTH), Lindstedtsv\"agen 25, SE-10044, Stockholm, Sweden. Email: duits@kth.se. Supported in part by the grant KAW 2010.0063 from the
Knut and Alice Wallenberg Foundation}}
\date{}
\begin{document}
\maketitle
\sloppy
\begin{abstract}
We consider an ensemble of $N$ discrete nonintersecting paths starting from equidistant points and ending at consecutive integers. Our first result is an explicit formula for the correlation kernel that allows us to analyze the process as $N\to \infty$.   In that limit we obtain a new general class of kernels describing the local correlations close to the equidistant starting points.
As the distance between the starting points goes to infinity, the correlation kernel converges to that of a single random walker.
As the distance to the starting line increases, however, the local correlations converge to the sine kernel.
Thus, this class interpolates between the sine kernel and an ensemble of independent particles.
We also compute the scaled simultaneous limit, with both the distance between particles and the distance to the starting line going to infinity, and obtain a process with number variance saturation, previously studied by Johansson.
\end{abstract}


\section{Introduction}


Nonintersecting path ensembles are a natural arena for generating and studying a variety of mathematical and physical phenomena. At the same time, they often have a nice integrable structure that allows for explicit computations. As such they constitute an important source of integrable systems in random matrix theory.  In the discrete context, for example, nonintersecting paths serve as an important link between representation theory, random matrix theory and combinatorics (see, e.g., \cite{BKMM,Forrester,J2,J} and the references therein for excellent discussions on the topic).

Typically, an ensemble of nonintersecting paths consists of a number of random walkers with prescribed initial and final positions that do not collide while performing the walk. 
The interaction between the walkers depends strongly on the initial and final positions of the walkers. For example, if these positions are close to each other (e.g., they start and end at the same point in the continuous case or consecutive points on a grid in the discrete case) the condition that they can never collide results in a strong repulsive interaction. However, if the initial and final positions are far apart, the effect of the forbidden intersection is much weaker, as the probability of colliding is small. 

In the present paper, we discuss a discrete model of nonintersecting paths  that interpolates between these two situations. We give a precise definition later on (see also Figure \ref{fig:paths}). Here we content ourselves with describing the central features. The most important feature is that  the initial positions are equally spaced with gap $k\in \bbN$ (instead of densely packed). The final positions are densely packed.  If $k$ is large, the interaction between the walkers right after the initial positions is small. On the other hand, if $k$ is small  the nonintersecting condition leads to repulsion that is substantially felt from the start.

We shall show that eventually, as we move  away from the starting points,  the local correlations converge to the usual sine kernel limit. However, this takes some time, and our main interest is in describing the process before universality is reached. In this regime, we obtain a new family of processes that depends on the parameter $k$ (and also on the parameters from the underlying random walks). Each member of the new family is a discrete determinantal point process. As we shall see, this family admits several interesting limits. In one limit we move away from the initial positions, arriving in the bulk regime, and  obtain the discrete sine kernel in the limit (i.e., the universality limit). In fact, we are able to generate a subclass (not all) of the extensions of the discrete sine kernel as introduced in \cite{Bor}. In a different limit, $k\to \infty$, we show that the process turns into a single random walker. In this sense, the family interpolates between two fundamental objects: the single random walk and the sine process.

For $k=1$ the model was discussed by Johansson in \cite{J2} to give a nonintersecting path interpretation for the Schur measure introduced in \cite{Ok1}. Thus, the family of processes we consider can be seen as a natural generalization of the Schur measure. Moreover, Johansson's model served as important inspiration for the definition of the Schur process by Okounkov and Reshetikhin \cite{OR}. In \cite{ImSa}, the authors introduced a generalization of the Schur process by allowing the particles to have arbitrary initial positions. Along the way, we will also prove a result (see Theorem \ref{generalFormula}) on the determinantal structure for the case of more general final and initial locations. 

A particular specialization of the Schur process (interpreting the time parameter as an extra spatial dimension) is the $q^{volume}$ weight for boxed plane partitions. We will show that this picture generalizes to general $k$ (in fact, to general initial spacings $\{k_j\}$), in the sense that a specialization of the process leads to a  $q^{volume}$ weight on certain three dimensional box configurations. However, whereas boxed plane partitions may be viewed as boxes placed in a corner, a similar interpretation for our model leads to boxes placed on a staircase. This is the reason for the name of the model.

It is interesting to note that Johansson's original model was described as a growth model \cite{J2}. Such an interpretation exists for the model described here as well, where in the general case $k>1$, the growth model obtained may be seen as a growth model with a certain \emph{random} ``staircase shaped'' initial condition. While it is not our intention to pursue this line of reasoning further  in this paper,  we will  further explore these connections in a subsequent publication.

It is also interesting to consider the continuum limit which places our model in the framework of Dyson Brownian motion \cite{dyson,grabiner} or GUE with external source \cite{BH}. This continuous analogue  played an important role in Johansson's proof of universality for the local correlation for GUE divisible Wigner matrices \cite{J4}.  A central question in this context regards the time it takes for the local correlations to `relax' to  the usual universality class (see \cite{Erd} and references therein). For the discrete model analyzed in this paper, we consider a significantly shorter time scale. Namely, we consider the local correlations at the very first steps after the starting points.

Finally, motivated by the study of the zeros of the Riemann zeta function, Johansson, \cite{J3}, used a continuous model to construct a family of determinantal point processes with number variance saturation. We will show that the process introduced in  \cite{J3} can be obtained as a special limit of our family. Moreover, we shall present a soft argument to show that number variance saturation holds in our setting as well.

The rest of this paper is structured as follows. The model is described in Sections 2.1 and  2.2. Our results are described in Sections 2.3--2.6. The proofs are given in Section 3.  
\subsubsection*{Acknowledgments}
We thank Alexei Borodin for many useful discussions. M.D.\ acknowledges the hospitality of the Einstein Institute of Mathematics at the Hebrew University of Jerusalem, where some of this work was done.

\section{Statement of the results}

\subsection{Nonintersecting paths}

Let us now introduce the model. See also Figure \ref{fig:paths}. Fix $N \in \bbN$ and let $|\alpha_j| <1$, $|\beta_j| <1$, ($j=1,\ldots,N$) be fixed parameters. Let, further, $k_1<k_2<k_3<\ldots<k_N$ and $l_1<l_2<l_3<\ldots<l_N$ be finite increasing sequences of integers. By shifting the model we may assume that $k_1=0$ and we do so from now on. Consider $N$ particles (=\lq walkers\rq) initially positioned on the vertical line $s=-N$ at heights $x_j=k_j$, $j=1\ldots N$.  
The particles perform walks with $2N$ steps each (plus a `semistep' at the end), ending on the line $s=N$ at the heights $x_j=l_j$.
The individual walks are described as follows: each step involves a horizontal jump and a random vertical jump. All horizontal jumps, except for the first step and final semistep, are jumps of size $1$ to the right. 
The first and last horizontal jumps are jumps of size $1/2$ to the right.
The vertical jumps occur on the lines $s \in \bbZ+1/2$. To the left of the middle line, $s=0$, all vertical jumps are upwards, and to the right of it, all vertical jumps are downwards.

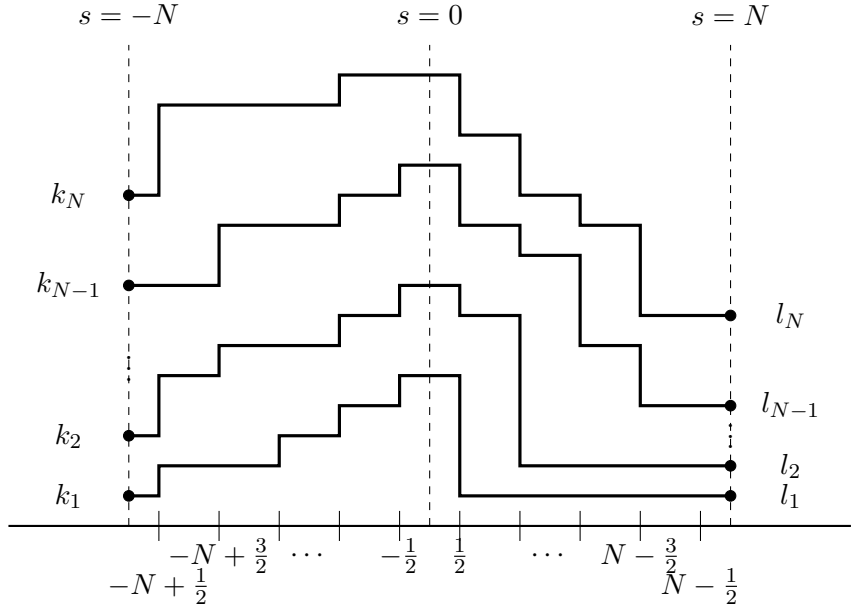
\begin{figure}[t]
\begin{center}
\begin{tikzpicture}[xscale=0.4,yscale=0.2]
\filldraw[yscale=2] (-10,0) circle(5pt);
\filldraw[yscale=2] (-10,2) circle(5pt);
\draw (-10,9) node{$\vdots$};
\filldraw[yscale=2] (-10,7) circle(5pt);
\filldraw[yscale=2] (-10,10) circle(5pt);
\filldraw (-12,0) node {$k_1$};
\filldraw (-12,4) node {$k_2$};
\filldraw (-12,14) node {$k_{N-1}$};
\filldraw (-12,20) node {$k_{N}$};

\filldraw [yscale=2](10,0) circle(5pt);
\filldraw[yscale=2] (10,1) circle(5pt);
\draw (10,4.5) node{$\vdots$};
\filldraw[yscale=2] (10,3) circle(5pt);
\filldraw[yscale=2] (10,6) circle(5pt);
\filldraw (12,0) node {$l_1$};
\filldraw (12,2) node {$l_2$};
\filldraw (12,6) node {$l_{N-1}$};
\filldraw (12,12) node {$l_N$};

\draw[very thick] (-10,0)--(-9,0)--(-9,2)--(-5,2)--(-5,4)--(-3,4)--(-3,6)--(-1,6)--(-1,8)--(1,8)--(1,0)--(10,0);
\draw[very thick] (-10,4)--(-9,4)--(-9,8)--(-7,8)--(-7,10)--(-3,10)--(-3,12)--(-1,12)--(-1,14)--(1,14)--(1,12)--(3,12)--(3,2)--(10,2);
\draw[very thick] (-10,14)--(-9,14)--(-9,14)--(-7,14)--(-7,18)--(-3,18)--(-3,20)--(-1,20)--(-1,22)--(1,22)--(1,18)--(3,18)--(3,16)--(5,16)--(5,10)--(7,10)--(7,6)--(10,6);
\draw[very thick] (-10,20)--(-9,20)--(-9,26)--(-3,26)--(-3,28)--(-1,28)--(-1,28)--(1,28)--(1,24)--(3,24)--(3,20)--(5,20)--(5,18)--(7,18)--(7,12)--(10,12);

\draw[dashed] (0,-2)--(0,30);
\draw (0,32) node{$s=0$};
\draw[dashed] (-10,-2)--(-10,30);
\draw (-10,32) node{$s=-N$};
\draw[dashed] (10,-2)--(10,30);
\draw (10,32) node{$s=N$};

\draw[thick] (-14,-2)--(14,-2);
\foreach \y in {-9,-7,...,9}
     		\draw[-] (\y,-3) -- (\y,-1); 

\draw (-9,-6) node{$-N+\tfrac{1}{2}$};
\draw (-7,-4) node{$-N+\tfrac{3}{2}$};
\draw (7,-4) node{$N-\tfrac{3}{2}$};
\draw (9,-6) node{$N-\tfrac{1}{2}$};
\draw (1,-4) node{$\tfrac{1}{2}$};
\draw (-1,-4) node{$-\tfrac{1}{2}$};
\draw (4,-4) node{$\cdots$};
\draw (-4,-4) node{$\cdots$};
\end{tikzpicture}
\end{center}
\caption{The model of nonintersecting paths. We will be mainly interested in the equally spaced situation $k_j=k(j-1)$ for $j=1,\ldots,N$ and some $k\in \bbN$ and $l_j=j-1$ for $j=1,\ldots,N$.}
\label{fig:paths}
\end{figure}
The first horizontal jump is of size $1/2$, to the line $s=-N+1/2$. The first vertical jump takes place on the line $s=-N+1/2$ and is an upwards jump of a random magnitude, where a jump of size $m$ is assigned the weight $\alpha_1^m$ (geometric distribution). 
The next horizontal jump is of size $1$ (to the line $s=-N+3/2$) where the particle now performs a vertical jump upwards of size $m$ with weight $\alpha_2^m$. This continues up to the middle line, $s=0$.
As noted earlier, the vertical jumps for each of the first $N$ steps are all upwards and the weight of a jump of size $m$ at the $n$'th step ($1 \leq n \leq N$) 
is given by $\alpha_n^m$. Thus, when all the particles have intersected the middle line, $s=0$, they have all traced random `up-right' paths. 
After jumping horizontally from the line $s= -1/2$ to the line $s=1/2$, the vertical jumps become downward jumps and the weight of a jump of size $m$ at the $n$'th step 
($N+1 \leq n \leq 2N$) is $\beta_{2N-n+1}^{m}$. The last ($2N$'th) vertical jump occurs on the line $s= N-1/2$. The last semistep is simply a horizontal jump of size $1/2$ from $s=N-1/2$ to the line $s=N$. 
Finally, we condition on the paths never to intersect. 

We want to describe the point configuration obtained by considering the intersections of the paths with the vertical lines $s=-(N-1),-(N-2), \ldots, 0$. We shall use the theory of determinantal point processes in Section 2.3 to do this, and we shall be mainly interested in the equally spaced situation: 
\begin{center}
{$k_j=k(j-1)$ for some $k \in \bbN$ and $l_j=j-1$.}
\end{center} First, however, we  present a different way of viewing this model. 

\subsection{An equivalent tiling model}
Our model of nonintersecting paths is equivalent to  lozenge tilings of a particular domain.  As mentioned in the Introduction, for $k_j=l_j=j-1$ this equivalence can already be found in \cite{OR} in connection to  a particular instance of the Schur process. Here we discuss the natural generalization to general $\{k_j\}_{j=1}^N$ (keeping $l_j=j-1$). 

The domains that we are tiling are defined in the following way (see also Figure \ref{fig:tiledomain} for an illustration). We start with the semi-infinite rectangle $[-N,N] \times \left(-\tfrac{1}{2},\infty\right)$. On the left vertical line we cut out $N$ right triangles with legs of unit length that are parallel to the axes and such that the points $(-N,k_j)$ are at the centers of the vertical legs. We now consider all possible tilings of this domain with the lozenges \begin{center}
\tikz[scale=0.20] \draw (-2,0) \loza;\, , \  \tikz[scale=0.20] \draw (0,0) \lozb;\, and  \tikz[scale=0.20] \draw (0,0) \lozc;  
\end{center}
The vertices of the lozenges are placed on the grid $\{-N,-N+1,\ldots,N\}\times \left(\bbN-\tfrac{1}{2}\right)$.

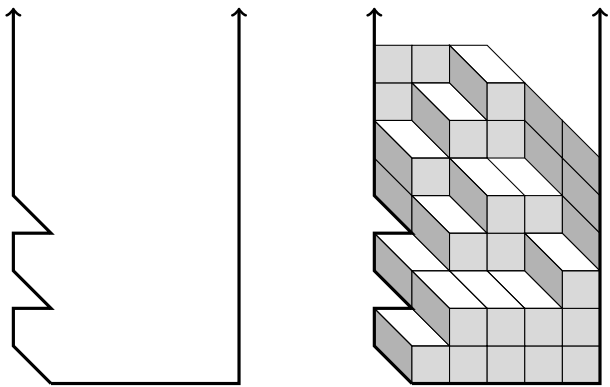
\begin{figure}[t]
\begin{center}
\begin{tikzpicture}[scale=0.25]
\draw[very thick,->] (-4,-2)--(-6,0)--(-6,2)--(-4,2)--(-6,4)--(-6,6)--(-4,6)--(-6,8)--(-6,18) ;
\draw[very thick, ->] (-4,-2)--(6,-2)--(6,18); \end{tikzpicture}
\hspace*{1.5cm}
\begin{tikzpicture}[scale=0.25]
 \draw (-4,1) \lozb ;
 \draw (-4,5) \lozb; 
  \draw (-4,9)\lozb; \draw (-4,11)
  \lozb; \draw (-4,13) \loza; \draw (-4,15) \loza;

\draw (-2,-1) \loza;
\draw (-2,1) \lozc; \draw (-2,3) \lozb; \draw (-2,5) \lozc; \draw (-2,7) \lozb; \draw (-2,9) \loza; \draw (-2,11) \lozc; \draw (-2,13) \lozb; \draw (-2,15) \loza;

\draw (-0,-1) \loza; 
\draw (0,1) \loza; \draw (0,3) \lozc; \draw (0,5) \loza; \draw (0,7) \lozc; \draw (0,9) \lozb; \draw (0,11) \loza; \draw (0,13) \lozc; \draw (0,15) \lozb;

\draw (2,-1) \loza; 
\draw (2,1) \loza; \draw (2,3) \lozc; \draw (2,5) \loza; \draw (2,7) \loza; \draw (2,9) \lozc; \draw (2,11) \loza; \draw (2,13) \loza; \draw (2,15) \lozc;

\draw (4,-1) \loza; 
\draw (4,1) \loza; \draw (4,3) \lozc; \draw (4,5) \lozb; \draw (4,7) \loza; \draw (4,9) ; \draw (4,11) \lozb; \draw (4,13) \lozb;   

\draw (6,-1) \loza; 
\draw (6,1) \loza; \draw (6,3) \loza; \draw (6,5) \lozc; \draw (6,7) \lozb; \draw (6,9) \lozb; \draw (6,11) \lozb;  

\draw[very thick,->] (-4,-1)--(-6,1)--(-6,3)--(-4,3)--(-6,5)--(-6,7)--(-4,7)--(-6,9)--(-6,19) ;
\draw[very thick, ->] (-4,-1)--(6,-1)--(6,19);
\end{tikzpicture}

\end{center}
\caption{The left picture shows the domain that we tile for the special case $N=3$ and $k_j=2j$. The right picture shows a particular tiling of this domain.}
\label{fig:tiledomain}
\end{figure}

Given $\{k_j\}_{j=1}^N$, we claim that there is a one-to-one correspondence between lozenge tiling of such domains and the nonintersecting path model introduced above.  To see this, draw a collection of nonintersecting paths as in Figure \ref{fig:sheartransform}. In the right half plane we perform a shear transformation $(s,x)\mapsto (s,x-s)$. Note that in Figure \ref{fig:sheartransform} the parts of the paths that end up in the lower half plane are slanted straight lines. In fact, by the nonintersecting condition and by the fact that the final points are consecutive integers, this holds for any possible configuration of paths. Hence these parts can be discarded.

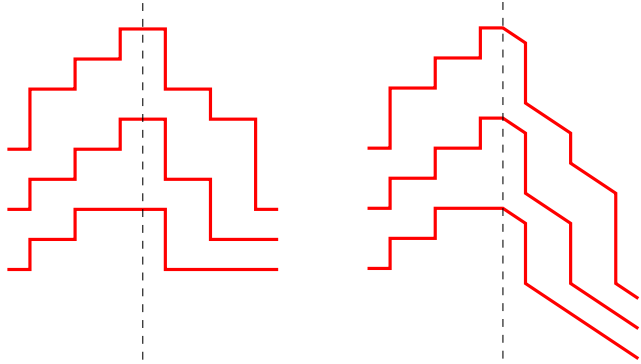
\begin{figure}[t]
\begin{center}
\subfigure{\begin{tikzpicture}[xscale=0.3,yscale=0.2]
\draw[very thick,color=red] (-6,8)--(-5,8) -- (-5,12)--(-3,12)--(-3,14)  --(-1,14)--(-1,16) --(1,16)--(1,12)--(3,12)--(3,10)--(5,10)--(5,4)--(6,4);
\draw[very thick,color=red] (-6,4)--(-5,4)  --(-5,6) -- (-3,6)--(-3,8) -- (-1,8)--(-1,10)--(1,10) --(1,6)--(3,6)--(3,2)--(6,2) ;
\draw[very thick,color=red] (-6,0)--(-5,0)--(-5,2) --(-3,2)--(-3,4) -- (-1,4)--(-1,4)--(1,4) -- (1,0) --(6,0);
\draw[dashed] (0,-6)--(0,18);
\end{tikzpicture}
}
\hspace{.05 \textwidth}
\subfigure{
\begin{tikzpicture}[xscale=0.3,yscale=0.2]
\draw[very thick,color=red] (-6,8)--(-5,8) -- (-5,12)--(-3,12)--(-3,14)  --(-1,14)--(-1,16) --(0,16)--(1,15)--(1,11)--(3,9)--(3,7)--(5,5)--(5,-1)--(6,-2);
\draw[very thick,color=red] (-6,4)--(-5,4)  --(-5,6) -- (-3,6)--(-3,8) -- (-1,8)--(-1,10)--(0,10)--(1,9) --(1,5)--(3,3)--(3,-1)--(6,-4) ;
\draw[very thick,color=red] (-6,0)--(-5,0)--(-5,2) --(-3,2)--(-3,4) -- (-1,4)--(-1,4)--(0,4)--(1,3) -- (1,-1) --(6,-6);
\draw[dashed] (0,-6)--(0,18);
\end{tikzpicture}
}
\end{center}
\caption{The left picture shows a particular path configuration for $N=3$ and $k_j=2j$. In the right picture we show the result of the shear transformation $(s,x)\mapsto(s,x-s)$ in the right half.}
\label{fig:sheartransform} 
\end{figure}

Draw the paths (minus the trivial parts) and the domain together in one  picture as shown in Figure \ref{fig:tiledomainpaths}. Starting from this configuration of paths, we tile the domain according to the rules in Figure \ref{fig:tilerules}.   We have different rules in the left half plane and in the right half plane. In the left half plane the horizontal parts of the paths are associated with  \tikz[scale=0.10]  \draw (0,0) \lozc;,\  the vertical parts with   \tikz[scale=0.10]  \draw (0,0) \lozb;  and the third type is associated with the blank spaces.   In the right half plane the slanted parts of the paths are associated with  \tikz[scale=0.10]  \draw (0,0) \lozc;, the vertical parts with   \tikz[scale=0.10]  \draw (0,0) \loza;  and the third type is associated with the blank spaces.  In order to get a consistent tiling we need an extra rule on the middle line $s=0$.

It is not difficult to see that this indeed always induces a lozenge tiling. Moreover, given any  lozenge tiling  we can construct a unique path configuration that leads to this tiling according to the rules in Figure \ref{fig:tilerules}.  This means that we have established a  one-to-one correspondence between our model of nonintersecting paths and lozenge tiling of domains as in Figure \ref{fig:tiledomain}. Note that in the special case $k_j=j-1$  (the densely packed case), the tiling in the lower left corner of the domain is uniquely determined by the boundary and so the boundary there can be modified to a line segment. In fact, in that case the problem is reduced to tiling a hexagon for which the vertical sides have infinite length.

\begin{figure}[t]
\begin{center}
\subfigure[Tiling rules  in the left half]{
\begin{tikzpicture}[scale=0.3]
\draw[thick] (2,0) \lozc;
\draw[very thick,color=red] (-1,1)--(1,1);
\draw[thick] (-6,0)\loza;
\draw[thick] (-2,-2) \lozb;
\draw[very thick,color=red] (-3,-1)--(-3,-3);
\end{tikzpicture}
}\hspace*{1cm}
\subfigure[Tiling rules in the right half]{

\begin{tikzpicture}[scale=0.3]
\draw[thick] (2,0) \lozc;
\draw[very thick,color=red] (-1,2)--(1,0);
\draw[thick] (-6,0)\loza;
\draw[very thick,color=red] (-7,2)--(-7,0);
\draw[thick] (-2,-2) \lozb;
\end{tikzpicture}
}
\hspace*{1cm}\subfigure[Tiling rule on the middle line]{
\begin{tikzpicture}[scale=0.6]
\draw[very thick,color=red] (-3,1)--(-2.5,1)--(-2,0.5);
\draw[thick] (-2.5,0.5)--(-3.5,1.5)--(-2.5,1.5)--(-1.5,0.5)--(-2.5,0.5);
\draw[color=white] (-4,-0.5)--(-1,-0.5);
\end{tikzpicture}
}
\end{center}
\caption{The different tiling rules.}
\label{fig:tilerules} \end{figure}
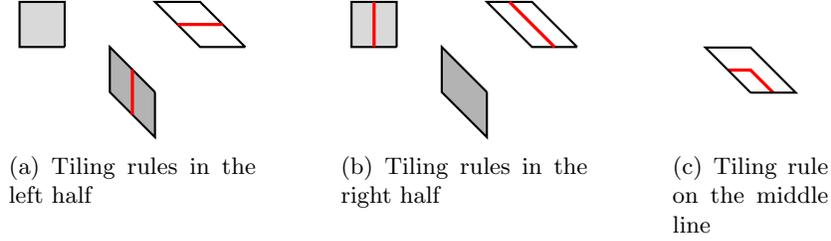
\begin{figure}[t]
\begin{center}

\begin{tikzpicture}[scale=0.25]
\draw[very thick,color=red] (-6,8)--(-5,8) -- (-5,12)--(-3,12)--(-3,14)  --(-1,14)--(-1,16) --(0,16)--(1,15)--(1,11)--(3,9)--(3,7)--(5,5)--(5,-1);
\draw[very thick,color=red] (-6,4)--(-5,4)  --(-5,6) -- (-3,6)--(-3,8) -- (-1,8)--(-1,10)--(0,10)--(1,9) --(1,5)--(3,3)--(3,-1);
\draw[very thick,color=red] (-6,0)--(-5,0)--(-5,2) --(-3,2)--(-3,4) -- (-1,4)--(-1,4)--(0,4)--(1,3) -- (1,-1); 
\draw[very thick,->] (-4,-1)--(-6,1)--(-6,3)--(-4,3)--(-6,5)--(-6,7)--(-4,7)--(-6,9)--(-6,19) ;
\draw[very thick, ->] (-4,-1)--(6,-1)--(6,19);
\end{tikzpicture}
\hspace*{1.5cm}
\begin{tikzpicture}[scale=0.25]

 \draw (-4,1) \lozb ;
 \draw (-4,5) \lozb; 
  \draw (-4,9)\lozb; \draw (-4,11)
  \lozb; \draw (-4,13) \loza; \draw (-4,15) \loza;

\draw (-2,-1) \loza;
\draw (-2,1) \lozc; \draw (-2,3) \lozb; \draw (-2,5) \lozc; \draw (-2,7) \lozb; \draw (-2,9) \loza; \draw (-2,11) \lozc; \draw (-2,13) \lozb; \draw (-2,15) \loza;

\draw (-0,-1) \loza; 
\draw (0,1) \loza; \draw (0,3) \lozc; \draw (0,5) \loza; \draw (0,7) \lozc; \draw (0,9) \lozb; \draw (0,11) \loza; \draw (0,13) \lozc; \draw (0,15) \lozb;

\draw (2,-1) \loza; 
\draw (2,1) \loza; \draw (2,3) \lozc; \draw (2,5) \loza; \draw (2,7) \loza; \draw (2,9) \lozc; \draw (2,11) \loza; \draw (2,13) \loza; \draw (2,15) \lozc;

\draw (4,-1) \loza; 
\draw (4,1) \loza; \draw (4,3) \lozc; \draw (4,5) \lozb; \draw (4,7) \loza; \draw (4,9) ; \draw (4,11) \lozb; \draw (4,13) \lozb;   

\draw (6,-1) \loza; 
\draw (6,1) \loza; \draw (6,3) \loza; \draw (6,5) \lozc; \draw (6,7) \lozb; \draw (6,9) \lozb; \draw (6,11) \lozb;  


 
 \draw[very thick,color=red] (-6,8)--(-5,8) -- (-5,12)--(-3,12)--(-3,14)  --(-1,14)--(-1,16) --(0,16)--(1,15)--(1,11)--(3,9)--(3,7)--(5,5)--(5,-1);
\draw[very thick,color=red] (-6,4)--(-5,4)  --(-5,6) -- (-3,6)--(-3,8) -- (-1,8)--(-1,10)--(0,10)--(1,9) --(1,5)--(3,3)--(3,-1);
\draw[very thick,color=red] (-6,0)--(-5,0)--(-5,2) --(-3,2)--(-3,4) -- (-1,4)--(-1,4)--(0,4)--(1,3) -- (1,-1); 
\draw[very thick,->] (-4,-1)--(-6,1)--(-6,3)--(-4,3)--(-6,5)--(-6,7)--(-4,7)--(-6,9)--(-6,19) ;
\draw[very thick, ->] (-4,-1)--(6,-1)--(6,19);
\end{tikzpicture}
\
\end{center}
\caption{The left picture shows the domain and a particular configuration of paths. The right picture shows the corresponding lozenge tiling according to the rules in Figure \ref{fig:tilerules}.}
\label{fig:tiledomainpaths}
\end{figure}
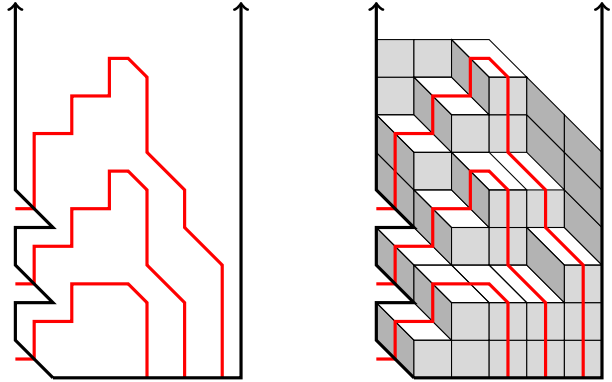

Lozenge tilings can also be viewed as a model in three dimensions. Indeed, a tiling is a two-dimensional depiction of a stack of boxes (see Figure \ref{fig:tiledomain}). For $k_j={j-1}$ each tiling is a boxed plane partition. In that case, the boxes are placed in a corner of a room with flat walls and floor. The weight that we put on paths leads to a natural weight on the lozenge tilings and hence on boxed plane partitions. Without proof we claim that the weight of the tiling $\mathcal T$ is proportional to
\beq \label{eq:tilingweight}
w(\mathcal T)=\prod_{\tikz[scale=0.10] \draw(0,0) \lozc;} w\left(\tikz[scale=0.10] \draw(0,0) \lozc;\right),
\eeq
where
\beq
w\left(\tikz[scale=0.10] \draw(0,0) \lozc; \right)
=\begin{cases} \left(\frac{\alpha_{N+i}}{\alpha_{N+i+1}}\right)^j, & i=-N+1,\dots,-1\\
\left(\alpha_N\beta_N\right)^j, & i=0\\
\left(\frac{\beta_{N-i}}{\beta_{N-i+1}}\right)^j, & i=1,\dots,N-1.\\
\end{cases}
\eeq
and $(i,j)$ are the coordinates of the centers of tiles \tikz[scale=0.10] \draw(0,0) \lozc; in $\mathcal T$. 
Note that if we set $\alpha_i=\beta_i=q^{\frac{1}{2}+N-i}$ for some $q\in (0,1)$ then the resulting weight falls in the class of $q^{volume}$ models. See \cite{OR} for the case $k_j=j-1$. For a construction of $q$-distribution on lozenge tilings and an extensive list of references see \cite{BGR}.


Note that for the special $k_j=j-1$, the room in which we place boxes, (the `initial condition'), can be constructed by drawing the tiling for the trivial configuration of paths, namely that of straight lines. In the general situation, a configuration of straight lines is not allowed. Nevertheless, there is a natural substitute for the trivial configuration. Consider the configuration for which the paths in the left half are straight lines, but after passing the vertical line $s=0$, at each step the walkers jump as much as the nonintersecting condition allows them. This leads to the staircase shaped paths as shown in Figure \ref{fig:staircase}.  In fact, if we draw the corresponding tiling (also shown in Figure \ref{fig:staircase}) then the tiling corresponds to part of a staircase in three dimensions. Note that this configuration  of paths or tiling is indeed the most basic one, in the sense that any other configuration can be constructed by placing boxes on top of this staircase.  Again, by the discussion around \eqref{eq:tilingweight}, for an appropriate choice of parameters our model falls in the class of $q^{volume}$ models, but now everything takes place on a staircase.

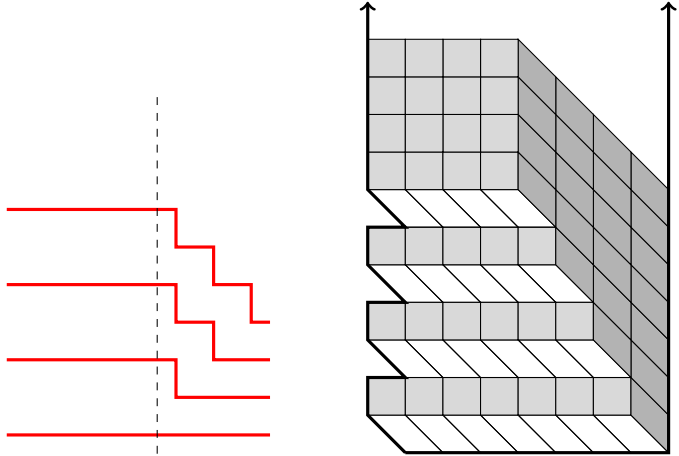
\begin{figure}[t]
\begin{center}
\begin{tikzpicture}[scale=0.25]
\draw[very thick,color=red] (-7,13)--(2,13)--(2,11)--(4,11)--(4,9)--(6,9)--(6,7)--(7,7);
\draw[very thick,color=red] (-7,9)--(2,9)--(2,7)--(4,7)--(4,5)--(7,5);
\draw[very thick,color=red] (-7,5)--(2,5)--(2,3)--(7,3);
\draw[very thick,color=red] (-7,1)--(7,1);
\draw[dashed] (1,0)--(1,19);
\end{tikzpicture}
\hspace*{0.5cm}
\hspace*{0.5cm}\begin{tikzpicture}[scale=0.25]

\foreach \y in {0,1,2,...,6} \draw (2*\y-6,-5) \lozc;
\foreach \y in {-1,0,1,2,...,5} \draw (2*\y-6,-3) \loza;
	\foreach \y in {0,1,2,...,5} \draw (2*\y-6,-1) \lozc;
	\foreach \y in {0,1,2,...,4} \draw (2*\y-6,3) \lozc;
	\foreach \y in {-1,0,1,2,...,4} \draw (2*\y-6,1) \loza;

	\foreach \y in {-1,0,1,2,...,3} \draw (2*\y-6,5) \loza;

	\foreach \y in {0,1,2,...,3} \draw (2*\y-6,7) \lozc;
	\foreach \y in {-1,0,1,2} \draw (2*\y-6,9) \loza;
	
		\foreach \y in {-1,0,1,...,5} \draw (6,-1+2*\y) \lozb;

		\foreach \y in {0,1,...,5} \draw (4,1+2*\y) \lozb;
		\foreach \y in {1,2,...,5} \draw (2,3+2*\y) \lozb;
		\foreach \y in {2,3,...,5} \draw (0,5+2*\y) \lozb;
		\foreach \y in {3,...,5} \draw (-2,5+2*\y) \loza;
		\foreach \y in {3,...,5} \draw (-8,5+2*\y) \loza;

		\foreach \y in {3,...,5} \draw (-4,5+2*\y) \loza;
		\foreach \y in {3,...,5} \draw (-6,5+2*\y) \loza;
		\draw[very thick,->] (-8,-5)--(-10,-3)--(-10,-1)--(-8,-1)--(-10,1)--(-10,3)--(-8,3)--(-10,5)--(-10,7)--(-8,7)--(-10,9)--(-10,19) ;
\draw[very thick, ->] (-8,-5)--(6,-5)--(6,19); 
\end{tikzpicture}
\end{center}
\caption{The \lq simplest\rq configuration of paths and the corresponding tiling. Viewed in three dimension the left picture is part of a staircase.}
\label{fig:staircase}
\end{figure}

\subsection{The process for finite $N$}

Returning to the path ensemble picture, we want to describe the resulting point process. We shall do this using the language and machinery of determinantal point processes.

A determinantal point process on a discrete set $X$ is a probability measure on  $2^X$  such that 
\beq\label{eq:detprocess}
\mathrm{Prob} (x_1,\ldots,x_n)=\det \left(K(x_i,x_j)\right)_{i,j=1}^n.
\eeq
for some kernel $K:X\times X\to\bbR$.  A determinantal point process is completely determined by the kernel $K$ (which is however not unique). For more details and background on determinantal point processes we refer to \cite{BorDet,HKPV,J,K,L,Sosh,Sosh2}.

We define  a point process on $\bbN \times \bbN$  by the locations of the intersections of the paths with the lines $s=-N,-N+1,\ldots,N-1,N$. More precisely, we define the process by
\beq
\begin{split}
\mathrm{Prob} ((s_1,x_1), & \ldots, (s_n,x_n))\\
&= \mathrm{Prob}\left (\textrm{paths pass through points } (s_j-N,x_j)\right).
\end{split}
\eeq
The shift $s_j-N$ in the right-hand side is introduced to simplify the upcoming formulas.

By a well known result of Lindstr\"om-Gessel-Viennot  \cite{J,Stem} the process is determinantal. Moreover, a formula for the kernel of this process is given by the Eynard-Mehta Theorem (see, e.g., \cite{BorDet}). However, the expression obtained is, in general, not useful for asymptotic analysis. In large part, this is due to the fact that this formula involves the inverse of a large Gramm matrix. Our first task is therefore to give a more explicit expression for the kernel.  


We will need some notation. For any $N$-tuple of complex numbers, $\gamma=(\gamma_1, \gamma_2, \ldots \gamma_N)$, let 
\beq \label{alternating}
h_{k_1,k_2,\ldots,k_N}(\gamma)= \det \left(\gamma_j^{k_i} \right)_{i,j=1}^N
\eeq
be the alternating polynomial of degree $k_1,k_2,\ldots,k_N$ in $\gamma_1,\gamma_2,\ldots \gamma_N$. 
Let further $(\beta; \beta_j \mapsto z)=(\beta_1, \beta_2, \ldots, \hat{\beta_j} \ z ,\ldots, \beta_N)$ be the $N$-tuple of $\beta$'s with $\beta_j$ replaced by $z$, and let
\beq \label{alternating-z}
h_{k_1,k_2,\ldots,k_N}(\beta; \beta_j \mapsto z)
\eeq
be the alternating polynomial in the $\beta$'s with $\beta_j$ replaced by $z$. Similarly, we define 
\beq 
h_{l_1,l_2,\ldots,l_N}(1/\beta;1/\beta_j \mapsto 1/z),
\eeq 
where we used the notation $1/\beta=(1/\beta_1,\ldots,1/\beta_N)$. We will also define \beq \label{Fell0}
F_\ell(z)=\left \{ \begin{array}{cc}  &(1-\alpha_\ell z)^{-1} \quad 1 \leq \ell \leq N \\
&\left(1-\frac{\beta_{2N-\ell+1}}{z} \right)^{-1} \quad N+1 \leq \ell \leq 2N  \end{array} \right. .
\eeq
which are  the generating functions for  the transition probabilities.

Our first result is an explicit expression for the kernel. 

\begin{theorem} \label{generalFormula}
Assume $\beta_j \neq \beta_\ell$ for $j \neq \ell$.
Let $0=k_1 <k_2 <\ldots <k_N$, $l_1<l_2<\ldots <l_N$ with $$l_N-k_1=l_N \leq N-1.$$ Then for any two points $(s_1, x_1), (s_2,x_2) \in \bbN^2$,
the kernel $K(s_1,x_1;s_2,x_2)$ is given by
\beq \label{kernel0}
\begin{split}
&K(s_1, x_1; s_2, x_2) =-{\mathbb 1}_{s_1>s_2} \frac{1}{2 \pi i} \oint_{\partial \mathbb{D}} \prod_{j=s_2+1}^{s_1} F_j(z) \frac{dz}{z^{x_1-x_2+1}} \\
& \quad +\sum_{j=1}^N  \frac{\prod_{r=1}^N \left(1-\alpha_r \beta_j \right)  \prod_{{s=1},{s \neq j}}^N (1-\beta_s/\beta_j)}{h_{k_1,k_2,\ldots,k_N}(\beta)h_{l_1,l_2,\ldots,l_N}(1/\beta) }\\& \qquad \times \left(\frac{1}{2 \pi i}\oint_{\partial \mathbb{D}} h_{k_1,k_2,\ldots, k_N}(\beta;\beta_j\mapsto z) \prod_{\ell=1}^{s_1} F_{\ell}(z) \frac{dz}{z^{x_1+1}} \right) \\
& \qquad \quad \times \left( \frac{1}{2 \pi i}\oint_{\partial \mathbb{D}}  h_{l_1,l_2,\ldots,l_N}(1/\beta;1/\beta_j \mapsto 1/z) 
 \prod_{\ell=s_2+1}^{2N} F_{\ell}(z) \frac{z^{x_2}dz}{z} \right).
\end{split}
\eeq
If we assume in additon that  $s_1, s_2 \leq N$, then
the kernel $K(s_1,x_1;s_2,x_2)$ takes the simpler form
\beq \label{kernel1}
\begin{split}
&K(s_1, x_1; s_2, x_2) =-{\mathbb 1}_{s_1>s_2} \frac{1}{2 \pi i} \oint_{\partial \mathbb{D}} \prod_{j=s_2+1}^{s_1} (1-\alpha_j z)^{-1} \frac{dz}{z^{x_1-x_2+1}} \\
& \quad +\sum_{j=1}^N \frac{1}{2 \pi i}\oint_{\partial \mathbb{D}} \frac{h_{k_1,k_2,\ldots, k_{N}}(\beta; \beta_j \mapsto z)}{h_{k_1,k_2,\ldots, k_{N}}(\beta)} \prod_{r=1}^{s_2}(1-\alpha_r \beta_j)\beta_j^{x_2} \prod_{\ell=1}^{s_1} (1-\alpha_\ell z)^{-1} \frac{dz}{z^{x_1+1}}.
\end{split}
\eeq
\end{theorem}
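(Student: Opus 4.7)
The plan is to apply the Lindstr\"om--Gessel--Viennot lemma to establish the determinantal structure, use the Eynard--Mehta formula for the correlation kernel, and then explicitly invert the resulting Gram matrix. Since every vertical jump is geometric, the one-step generating function is $F_{\ell}(z)$, and the transition of a single walker from step $s_2$ to step $s_1$ admits the contour-integral representation
\[
T_{s_2 \to s_1}(x_1, x_2) = \frac{1}{2\pi i}\oint_{\partial\mathbb{D}} \prod_{\ell=s_2+1}^{s_1} F_\ell(z)\, \frac{dz}{z^{x_1 - x_2 + 1}},
\]
with $\partial \mathbb{D}$ chosen so that $|\beta_m| < |z| < 1/|\alpha_r|$ for all $r,m$. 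By LGV the measure on intersection configurations is determinantal, and the Eynard--Mehta formula gives
\[
K(s_1,x_1;s_2,x_2) = -\mathbf{1}_{s_1 > s_2}\, T_{s_2 \to s_1}(x_1,x_2) + \sum_{i,j=1}^{N} T_{0 \to s_1}(k_i,x_1)\, [G^{-1}]_{ij}\, T_{s_2 \to 2N}(x_2,l_j),
\]
where $G_{ij} = T_{0 \to 2N}(k_i, l_j)$. This reduction is standard.

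The main work is the explicit evaluation of $G^{-1}$. The strategy is to apply the Lagrange / partial-fraction identity
\[
\prod_{m=1}^N \frac{1}{1 - \beta_m/z} = \sum_{n=1}^N \frac{1}{\prod_{\ell\neq n}(1 - \beta_\ell/\beta_n)}\cdot \frac{1}{1 - \beta_n/z},
\]
which factorizes $G$ as $A\cdot B$, with $A_{in}$ depending on the $k_i$'s, $\alpha$'s, and $\beta_n$, and $B_{nj}$ depending only on $\beta_n$ and the $l_j$'s with a Vandermonde/Cauchy-type row structure. Inverting each factor, the entries of $A^{-1}$ and $B^{-1}$ can be written in closed form as ratios of alternating polynomials $h_{k_1,\ldots,k_N}(\beta;\beta_j \mapsto z)/h_{k_1,\ldots,k_N}(\beta)$ and $h_{l_1,\ldots,l_N}(1/\beta;1/\beta_j \mapsto 1/z)/h_{l_1,\ldots,l_N}(1/\beta)$, respectively. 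Substituting back into the Eynard--Mehta formula, the double sum collapses to a single sum over $j$ and the transitions reassemble into the contour integrals of (2.9). The factor $\prod_r(1 - \alpha_r\beta_j)$ emerges from evaluating the $\alpha$-generating function at $z = \beta_j$ in the Lagrange residue step.

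The simpler formula (2.10) follows from (2.9) under $s_1, s_2 \leq N$ by deforming the $\psi_j$-contour. In this range the $\alpha$-poles at $z = 1/\alpha_r$ for $s_2 < r \leq N$ lie outside $\partial\mathbb{D}$, and the factor $\prod_m(1 - \beta_m/z)^{-1} = z^N/\prod_m(z - \beta_m)$ kills any pole at $z = 0$ (using $l_N \leq N - 1$ and $x_2 \geq 0$). The only remaining interior singularities are simple poles at $z = \beta_m$; for $m \neq j$ the residue vanishes since $h_{l_1,\ldots,l_N}(1/\beta;1/\beta_j \mapsto 1/\beta_m)$ has two coinciding arguments in an alternating polynomial. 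Computing the residue at $z = \beta_j$ cancels $h_{l_1,\ldots,l_N}(1/\beta)$ against itself and the Vandermonde $\prod_{s\neq j}(1 - \beta_s/\beta_j)$, reduces $\prod_{r=1}^N(1 - \alpha_r\beta_j)$ to $\prod_{r=1}^{s_2}(1 - \alpha_r\beta_j)$ (the remaining $\alpha$-factors contributing $\prod_{r=s_2+1}^N(1 - \alpha_r\beta_j)^{-1}$), and produces the factor $\beta_j^{x_2}$, giving precisely (2.10).

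The main obstacle is the explicit inversion of $G$: one must recognize the Cauchy/Vandermonde structure hidden in the geometric-weight transitions so that the double sum collapses to a single sum and the alternating polynomials emerge naturally. Once (2.9) is in hand the passage to (2.10) is a routine residue calculation.
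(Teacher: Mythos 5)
Your approach is correct but genuinely different from the paper's, and the contrast is worth spelling out. You invert the Gram matrix explicitly: evaluating $G_{ij}$ by residues (using $l_N-k_1\le N-1$ to rule out a pole at $z=0$ in $\prod_\ell F_\ell(z)\,z^{k_i-l_j-1}$) exhibits the Cauchy--Vandermonde factorization $G_{ij}=\sum_n A_{in}B_{nj}$ with $A_{in}=\beta_n^{k_i}/\prod_{r}(1-\alpha_r\beta_n)$ and $B_{nj}=\beta_n^{-l_j}/\prod_{\ell\neq n}(1-\beta_\ell/\beta_n)$; each factor is a generalized Vandermonde times a diagonal matrix, whose inverse is captured by the Lagrange/Cramer identity $\sum_i(V_k^{-1})_{ni}z^{k_i}=h_{k_1,\ldots,k_N}(\beta;\beta_n\mapsto z)/h_{k_1,\ldots,k_N}(\beta)$ and its $l$-analogue, and substituting $G^{-1}=B^{-1}A^{-1}$ into the Eynard--Mehta kernel collapses the double sum into the single $n$-sum of \eqref{kernel0}. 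The paper never writes down $G^{-1}$: it performs a \emph{bi-orthogonalization}, replacing $\Phi_j,\Psi_j$ by the alternating-polynomial combinations $\widetilde{\Phi}_j,\widetilde{\Psi}_j$ (same linear spans, hence same kernel), and a single residue computation then shows the new Gram matrix is already diagonal. Both routes ultimately rest on the same Cauchy--Vandermonde structure, and the paper's Remark after the theorem explicitly identifies this dichotomy with the Imamura--Sasamoto approach, which your proof generalizes. Two points to tighten in your writeup: (i) the hypothesis $l_N-k_1\le N-1$ is already needed to establish the factorization of $G$, not only in the passage to \eqref{kernel1}; (ii) if you insert the partial-fraction identity under the integral sign, each term can pick up a residue at $z=0$ individually, and you should either note that these cancel in the $n$-sum (they do, since the full integrand is regular at $0$) or evaluate $G_{ij}$ by residues at the $\beta_n$'s directly. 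Your residue derivation of \eqref{kernel1} from \eqref{kernel0} is correct and matches the paper's.
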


\begin{remark*}
For the special case $l_j=j-1$ a different representation for the kernel in \eqref{kernel0} can be found in  \cite{ImSa}. Their proof is based on a computation of the inverse of the Gramm matrix in the Eynard-Mehta Theorem.  We follow a different approach that is better suited for our purposes. We first perform a preliminary bi-orthogonalization so that the corresponding Gramm matrix becomes diagonal.
\end{remark*}

\begin{remark*}
The condition $l_N-k_1\leq N-1$ is necessary in our approach in order to be able to get a diagonal Gramm matrix (see the discussion before \eqref{diagonalGramm}). Though it seems to be a technical condition, one way of looking at it is as follows: the two conditions $l_N-k_1 \leq N-1$ and $k_N-l_1 \leq N-1$ together are equivalent to having both starting and ending positions in a densely packed configuration. Thus, removing one of these conditions and keeping the other is a natural extension of the densely packed situation. One in which neither the starting nor the ending points need to be densely packed.
\end{remark*}

\begin{remark*}
It is remarkable that the kernel \eqref{kernel1} for $s_j\leq N$ does not depend on the endpoints $l_j$. Furthermore, the kernel only depends on the parameters $\alpha_j$ with $j\leq \max\{s_1,s_2\}$.  These effects seem to be related to the memorylessness of geometric random variables (recall that the height of the jumps at each step is geometrically weighted).
\end{remark*}

As noted in the Introduction, we are interested in the special case, $k_j=k(j-1)$, $l_j=j-1$, and we want to study the process near the left boundary, namely, for $s_1,s_2\leq N$. In fact, as remarked above, formula \eqref{kernel1} shows us that in this case, as long as $l_N \leq N-1$, the kernel is independent of the points on the right hand side. The following theorem shows that the kernel has an even simpler formula in this case.

\begin{theorem} \label{equalSpacing}
In the case $k_j=k(j-1)$  $($for some $k \in \bbN$, fixed$)$, and any $l_1<l_2< \ldots <l_N \leq N-1$, and for $(s_1, x_1), (s_2,x_2) \in \bbN^2$ with $s_1, s_2 \leq N$,
\beq \label{kernel2}
\begin{split}
& K(s_1,x_1;s_2,x_2)=-{\mathbb 1}_{s_1>s_2} \frac{1}{2 \pi i} \oint_{\partial \mathbb{D}} \prod_{j=s_2+1}^{s_1} (1-\alpha_j z)^{-1}\frac{dz}{z^{x_1-x_2+1}} \\
& \quad +\frac{k}{(2 \pi i)^2} \oint_{\partial \mathbb{D}} \oint_{\Gamma_\beta} \frac{\prod_{r=1}^{s_2}(1-\alpha_r w)\prod_{j=1}^N (z^k-\beta_j^k)}{\prod_{\ell=1}^{s_1}(1-\alpha_\ell z) \prod_{t=1}^N(w^k-\beta_t^k)}  
\frac{w^{x_2+k-1}dw dz}{z^{x_1+1} (z^k-w^k)},
\end{split}
\eeq
where the integration in $w$ is carried out on $\Gamma_\beta$, a contour around the $\beta_j$'s which avoids the other zeros of $w^k-\beta_j^k$. The integration in $z$ is on $\partial \mathbb{D}$.

In particular, in the case $\beta_r \equiv \beta$ for some $\beta$ with $| \beta| <1$, it follows that
\beq \label{kernel3}
\begin{split}
& K(s_1,x_1;s_2,x_2)=-{\mathbb 1}_{s_1>s_2} \frac{1}{2 \pi i} \oint_{\partial \mathbb{D}} \prod_{j=s_2+1}^{s_1} (1-\alpha_j z)^{-1}\frac{dz}{z^{x_1-x_2+1}} \\
& \quad +\frac{k}{(2 \pi i)^2} \oint_{\partial \mathbb{D}} \oint_{\Gamma_\beta} \frac{\prod_{r=1}^{s_2}(1-\alpha_r w) (z^k-\beta^k)^N}{\prod_{\ell=1}^{s_1}(1-\alpha_\ell z) (w^k-\beta^k)^N}  
\frac{w^{x_2+k-1}dw dz}{z^{x_1+1} (z^k-w^k)},
\end{split}
\eeq
\end{theorem}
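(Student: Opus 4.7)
The plan is to derive \eqref{kernel2} from the general formula \eqref{kernel1} of Theorem \ref{generalFormula}, which applies since the hypothesis $l_N \le N-1$ is assumed. The decisive simplification is that when $k_j = k(j-1)$, the alternating polynomial in \eqref{kernel1} is a Vandermonde in the variables $\beta_j^k$:
$$h_{0,k,\ldots,(N-1)k}(\beta) = \det\bigl(\beta_j^{k(i-1)}\bigr)_{i,j=1}^N = \prod_{1 \le r < s \le N}(\beta_s^k - \beta_r^k),$$
and $h_{0,k,\ldots,(N-1)k}(\beta;\beta_j \mapsto z)$ is the same Vandermonde with $\beta_j^k$ replaced by $z^k$. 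A short sign-count yields
$$\frac{h_{0,k,\ldots,(N-1)k}(\beta;\beta_j \mapsto z)}{h_{0,k,\ldots,(N-1)k}(\beta)} = \frac{\prod_{r \ne j}(z^k - \beta_r^k)}{\prod_{r \ne j}(\beta_j^k - \beta_r^k)} = \frac{\prod_{r=1}^N(z^k - \beta_r^k)}{(z^k - \beta_j^k)\prod_{r\ne j}(\beta_j^k - \beta_r^k)}.$$

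The second step is to convert the finite sum over $j$ in \eqref{kernel1} into a $w$-contour integral via the residue identity
$$\sum_{j=1}^N \frac{f(\beta_j)}{\prod_{r \ne j}(\beta_j^k - \beta_r^k)} = \frac{1}{2\pi i}\oint_{\Gamma_\beta}\frac{k w^{k-1} f(w)}{\prod_{r=1}^N(w^k - \beta_r^k)}\, dw,$$
valid for any $f$ analytic in a neighbourhood of $\Gamma_\beta$, where $\Gamma_\beta$ encloses only $\beta_1,\ldots,\beta_N$ among the poles of the integrand. This rests on the elementary computation $w^k - \beta_j^k \sim k\beta_j^{k-1}(w-\beta_j)$ as $w \to \beta_j$, which gives the simple-pole residue $1/\prod_{r\ne j}(\beta_j^k - \beta_r^k)$ of $kw^{k-1}/\prod_r(w^k - \beta_r^k)$ at $\beta_j$. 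I would apply the identity with
$$f(w) = \frac{\prod_{r=1}^{s_2}(1-\alpha_r w)\, w^{x_2}}{z^k - w^k},$$
pull the $j$-independent factor $\prod_r(z^k - \beta_r^k)$ out of the sum, and absorb $k w^{k-1} w^{x_2} = k w^{x_2+k-1}$ into the exponent; the second term of \eqref{kernel1} then becomes precisely the double contour integral in \eqref{kernel2}.

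The one point requiring care is the contour $\Gamma_\beta$: besides the desired poles at the $\beta_r$'s, the integrand has spurious poles at the other $k$-th roots $\beta_r\omega^m$ (with $\omega = e^{2\pi i/k}$, $m=1,\ldots,k-1$) and at the zeros $w = z\omega^m$ of $z^k - w^k$ used in $f$. Since $|\beta_r|<1$ while $|z|=1$ on $\partial\bbD$, and since the spurious $k$-th roots of each $\beta_r^k$ are rotated copies of $\beta_r$ (hence separated in argument from $\beta_r$ itself by a fixed amount), one can always fit a contour around the $\beta_r$'s that avoids both unwanted sets uniformly in $z \in \partial\bbD$; this is what justifies interchanging the $z$- and $w$-integrals. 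Finally, \eqref{kernel3} follows from \eqref{kernel2} by continuity in the parameters: the right-hand side of \eqref{kernel2} depends analytically on $(\beta_1,\ldots,\beta_N)$ because $\Gamma_\beta$ can be kept fixed (still enclosing $\beta$) as the $\beta_j$'s merge, and setting $\beta_j \equiv \beta$ collapses $\prod_j(z^k-\beta_j^k)$ and $\prod_t(w^k-\beta_t^k)$ to $(z^k-\beta^k)^N$ and $(w^k-\beta^k)^N$, respectively. The main obstacle, if any, is the contour choice and sign bookkeeping; neither is deep.
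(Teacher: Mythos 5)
Your proposal is correct and follows essentially the same route as the paper's proof: start from \eqref{kernel1}, recognize that for $k_j = k(j-1)$ the ratio $h_{0,k,\ldots,(N-1)k}(\beta;\beta_j\mapsto z)/h_{0,k,\ldots,(N-1)k}(\beta)$ collapses to $\prod_{\ell\ne j}(z^k-\beta_\ell^k)/\prod_{\ell\ne j}(\beta_j^k-\beta_\ell^k)$, convert the resulting sum over $j$ into a $w$-contour integral by the residue computation for $kw^{k-1}/\prod_r(w^k-\beta_r^k)$ at $w=\beta_j$, and obtain \eqref{kernel3} from \eqref{kernel2} by continuity as the $\beta_j$ merge. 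The only cosmetic difference is that you derive the ratio formula via the Vandermonde structure of $h$ and state the residue step as a standalone identity, whereas the paper obtains the ratio directly and carries out the residue computation inline; the mathematics is identical.
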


\subsection{The limiting process}

Our next task is to compute the limiting behavior of  the process close to the starting points as $N\to \infty$.  We will only consider the equally spaced situation $k_j=k(j-1)$ and take $\alpha_j \in (0,1)$ and  $\beta_j=\beta$ for some constant $\beta\in (0,1)$. Hence we consider the process with the kernel in  \eqref{kernel3}. The region of interest to us is the following: take $s\in \bbN$ and  $\xi\in (0,1)$ and consider the process in the neighborhood of the point $(s,\xi k N)$ (note that the starting points are in the line segment $[0,kN]$). 

As noted in the beginning of the previous subsection, a determinantal point process is completely determined by the kernel. Hence, in order to find the limiting process it suffices to derive the limiting behavior of the kernel $K$ in \eqref{kernel3a}. Before we state our main result, we first define a family of relevant kernels.

\begin{definition}
 For $k \in \bbN$ and a sequence of positive numbers, $\gamma=\{\gamma_j\}_{j=1}^\infty$, let   $\bbK_k^\gamma$ be the kernel defined by 
\beq \label{kernel3a}
\begin{split}
\bbK_k^{\gamma} (s_1,x_1;s_2,x_2)
&=-\frac{{\mathbb 1}_{s_1>s_2}}{2\pi i} \oint_{\Gamma_0} \prod_{r=s_2+1}^{s_1} (1-\gamma_r z)^{-1} \frac{dz}{z^{x_1-x_2+1}}\\&+\sum_{j=0}^{k-1} \frac{\omega_k^{-j x_1}}{2\pi i} \int_{e^{-\pi i/k}}^{e^{\pi i/k}} \frac{\prod_{r=1}^{s_2}{(1- \gamma_r z)}}{\prod_{t=1}^{s_1}{(1-\omega_k^j \gamma_t z)}} \frac{dz}{z^{x_1-x_2+1}}.
\end{split}
\eeq
Here $\omega_k=e^{2\pi i/k}$, $\Gamma_0$ is a closed positively oriented contour around the pole $z=0$ and no other, and the integrals from $e^{-i \pi /k}$ to $e^{\pi i/k}$ are over a path that intersects the real axis only once in $(0,\inf_{j=s_2+1,\ldots,s_1}(1/\gamma_j))$. 
\end{definition}
To the best of our best knowledge the kernel $\bbK_k^\gamma$ has not appeared in the literature before. We will derive some properties later on, but first we state the main asymptotic result of the paper.

 \begin{theorem}\label{th3}

Let $\{x(N)\}_N \subset \bbN$ be such that 
\beq 
\label{eq:limitsequence}
x(N)=0 \mod k \quad \textrm{ and } \lim_{N \to \infty} \frac{x(N)}{kN}=\xi\in (0,1).
\eeq
Then, with $K$ as in \eqref{kernel3} and $\bbK_k^\gamma$ as in \eqref{kernel3a}, we have that
\beq \label{eq:limitth3}
\begin{split}
\lim_{N\to \infty} {((\xi/(1-\xi))^{1/k} \beta )}^{x_1-x_2}  K(s_1,x(N)+x_1; &  s_2,x(N)+x_2)  \\
&=\bbK_k^{\gamma } (s_1,x_1;s_2,x_2)
\end{split}
\eeq
for $(s_j,x_j)\in \bbN\times \bbZ$. Here $\gamma=(\gamma_1,\gamma_2,\ldots)$ with $\gamma_j=(\xi/(1-\xi))^{1/k} \beta \alpha_j$.
\end{theorem}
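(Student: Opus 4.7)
The plan is to perform a saddle-point analysis of the rescaled double contour integral in \eqref{kernel3}. First, substitute $z \mapsto cz$ and $w \mapsto cw$ with $c = \beta(\xi/(1-\xi))^{1/k}$. With this substitution the $c$-powers from the Jacobians, the factors $w^{x(N)+x_2+k-1}/z^{x(N)+x_1+1}$, and the rescaling prefactor $c^{x_1-x_2}$ combine to give $1$, and the new contours are $|z|=1/c$ and a small loop around $\rho = ((1-\xi)/\xi)^{1/k}$. The indicator term is handled immediately: its contour $|z|=1/c$ is homotopic to any $\Gamma_0$ around the origin (the other poles, at $1/\gamma_\ell$, are outside), reproducing the first term of $\bbK_k^\gamma$.

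For the double integral, the large-$N$ behavior is governed by the exponential factor $e^{N[\tilde h_N(z)-\tilde h_N(w)]}$ with $\tilde h_N(z) = \log(\xi z^k - (1-\xi)) - (x(N)/N)\log z$. The critical points of the limit $\tilde h$ satisfy $z^k = -1$ and are $z_j = e^{i\pi(2j+1)/k}$ for $j=0,\dots,k-1$, all on the unit circle. A direct computation shows $\tilde h(z_j)$ is purely imaginary and $\tilde h''(z_j) z_j^2 = \xi k^2(1-\xi) > 0$, so that $\Real \tilde h \leq 0$ on the unit circle (with equality only at the saddles) and $\Real \tilde h \geq 0$ on radial lines through the saddles. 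Thus the $z$-contour will be deformed toward the unit circle (steepest descent for $\tilde h$), and the $w$-contour toward a radial contour through each saddle (steepest descent for $-\tilde h$).

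The main step is the simultaneous contour deformation. Apply the partial-fraction identity $\frac{1}{z^k-w^k} = \sum_{j=0}^{k-1} \frac{\omega_k^j}{k w^{k-1}(z - w\omega_k^j)}$. Deforming the $z$- and $w$-contours as described, we cross the simple poles at $w = z\omega_k^j$. At each such pole the factors $(\xi z^k - (1-\xi))^N$ and $(\xi w^k-(1-\xi))^N$ cancel (since $(z\omega_k^j)^k = z^k$), and the hypothesis $x(N) \equiv 0 \pmod k$ makes $\omega_k^{j\,x(N)} = 1$. The resulting residues combine into a sum over $j=0,\dots,k-1$ of single-integral expressions whose integrands match those appearing in the definition of $\bbK_k^\gamma$; a change of variable $z\mapsto \omega_k^{-j}z$ in the $j$-th term converts the contour into the arc $[e^{-i\pi/k},e^{i\pi/k}]$. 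Finally, the residual double integral over the deformed contours is estimated by steepest descent: the $k^2$ saddle pairs each contribute $O(N^{-1/2})$ (the singularity $1/(z-w)$ remains integrable near each diagonal saddle), so the residual vanishes in the limit.

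The principal obstacle is constructing the $w$-deformation while avoiding the order-$N$ poles of $1/(\xi w^k - (1-\xi))^N$ at the other points $\rho\omega_k^\ell$ with $\ell\neq 0$. Depending on whether $\rho>1$ or $\rho<1$, the small loop around $\rho$ must be respectively shrunk or enlarged through the unit circle, in both cases staying within sectors free of these high-order poles. The finite-$N$ shift $\delta_N = x(N)-\xi kN = o(N)$ moves the true saddle of $\tilde h_N$ slightly off $z_j$; using the exact saddle of $\tilde h_N$ instead of $\tilde h$ absorbs these subleading corrections.
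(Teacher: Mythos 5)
Your proposal is correct and takes essentially the same approach as the paper: a steepest-descent analysis of the double contour integral, using the partial-fraction decomposition of $1/(z^k-w^k)$ to reduce to $1/(z-w)$-type interactions, picking up the residues that assemble into the $k$ single integrals of $\bbK_k^\gamma$, and bounding the residual double integral by $O(N^{-1/2})$. The only substantive stylistic difference is that you rescale by $c=\beta(\xi/(1-\xi))^{1/k}$ at the outset (placing the saddles at the $k$-th roots of $-1$), whereas the paper performs the equivalent conjugation $w=z/|z^*|$ at the very end; your count of ``$k^2$ saddle pairs'' is a bit loose (the $w$-contour, constrained to enclose only $\rho$, passes through just two saddles), but this does not affect the $O(N^{-1/2})$ estimate since $k$ is fixed.
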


\begin{remark}
In the limit on the right-hand side of \eqref{eq:limitth3} the kernel is conjugated with $\left( (\xi/(1-\xi))^{1/k} \beta \right)^x$. This however does not change the process. Indeed, from \eqref{eq:detprocess} one readily checks that  if $K$ is the kernel of a determinantal point process and $G$ is a non-vanishing function, then $K_G(x,y)=G(x)/ G(y) \, K(x,y)$ is a different kernel for the same process.
\end{remark}

\begin{remark}
Putting $s_1=s_2=s$ 
and  $x_1=x_2=x$ we get the following expression for the mean density for the process induced by $\bbK_k^{\gamma}$.
\beq \label{kernel3c}
\begin{split}
\bbK_k^{ \gamma} (s,x;s,x)
=\frac{1}{k}+\sum_{j=1}^{k-1} \frac{\omega_k^{-j x}}{2\pi i} \int_{e^{-\pi i/k}}^{e^{\pi i/k}} \prod_{r=1}^{s}\left(\frac{1-\gamma_r z}{1-\omega_k^j \gamma_r z}\right) \frac{dz}{z}.
\end{split}
\eeq
This expression is clearly $k$-periodic in $x$. Moreover, 
Theorem \ref{th3} tells us that the leading order term for the mean density $K(s,x(N)+x;s,x(N)+x)$, as $N\to \infty$,  is given by $\bbK_k^{\gamma}(s,x;s,x)$ in \eqref{kernel3c}. In Figures \ref{fig:ex1} and \ref{fig:ex2} we used \eqref{kernel3c} to plot a local approximation to the mean density in some special cases. 
Namely, we write each integer on the horizontal axis as $j k+x$, with $0\leq x<k$, and compute the values of $\bbK_k^\gamma$ for $\xi=j/N$ (note that $\gamma$ depends on $\xi$ as indicated in Theorem \ref{th3}). The local $k$-periodicity is clearly visible in these graphs. In addition to this, the amplitude as a function of $\xi$ shows an interesting oscillatory behavior.  
\begin{figure}[t]
\begin{center}
\subfigure[$s=1$]{\includegraphics[width=.4 \textwidth,height=.2 \textheight]{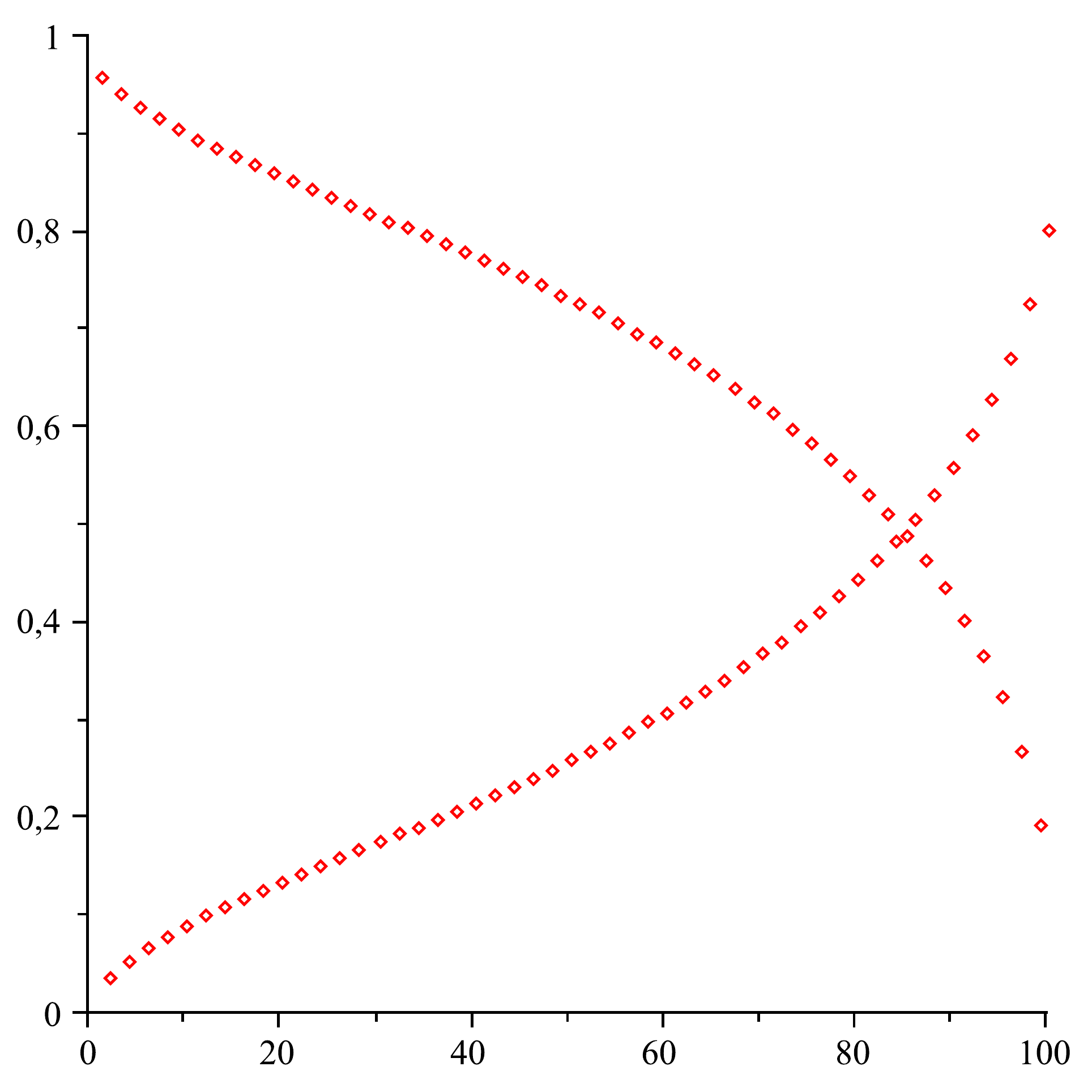}}
\subfigure[$s=3$]{\includegraphics[width=.4 \textwidth,height=.2 \textheight]{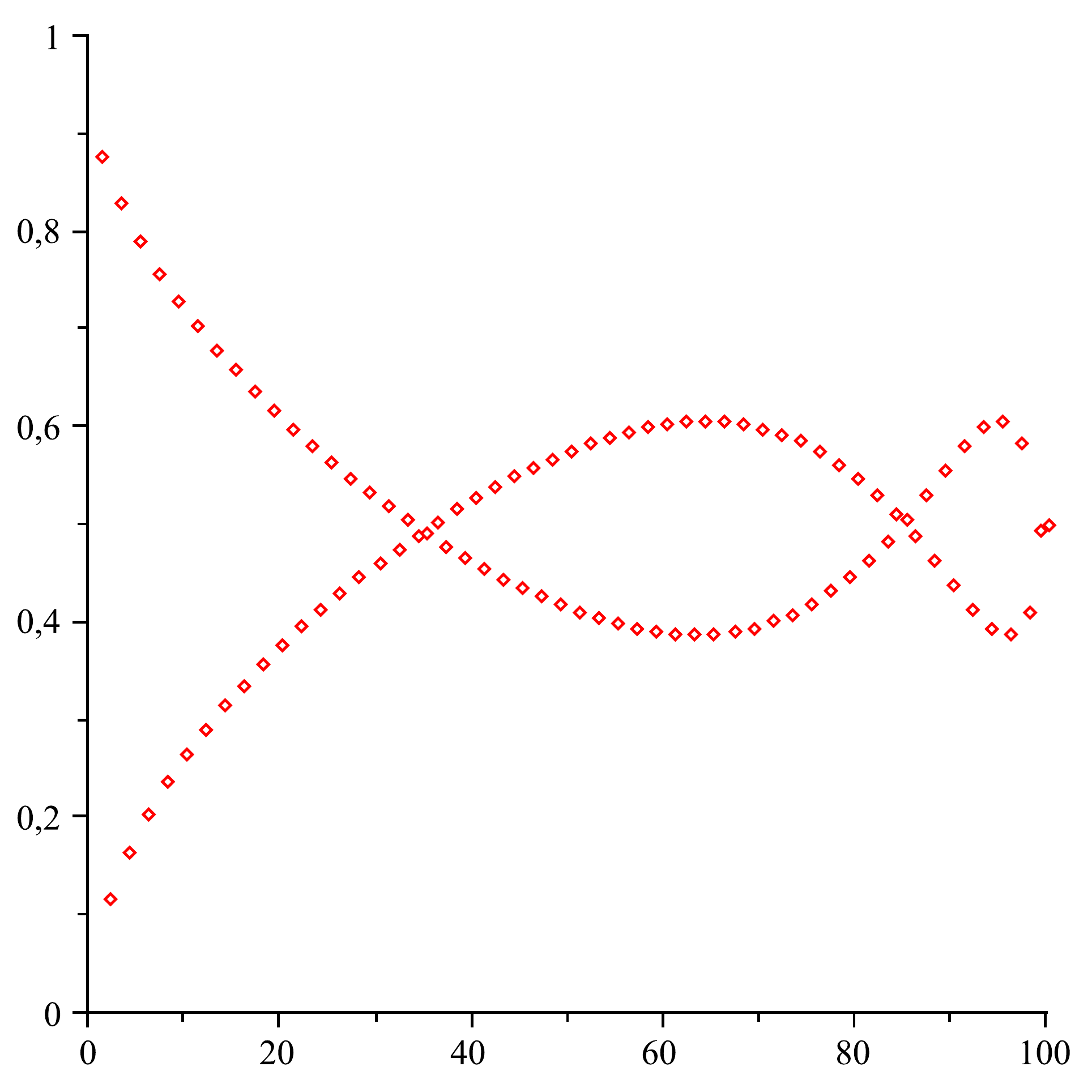}}\\
\subfigure[$s=5$]{\includegraphics[width=.4 \textwidth,height=.2 \textheight]{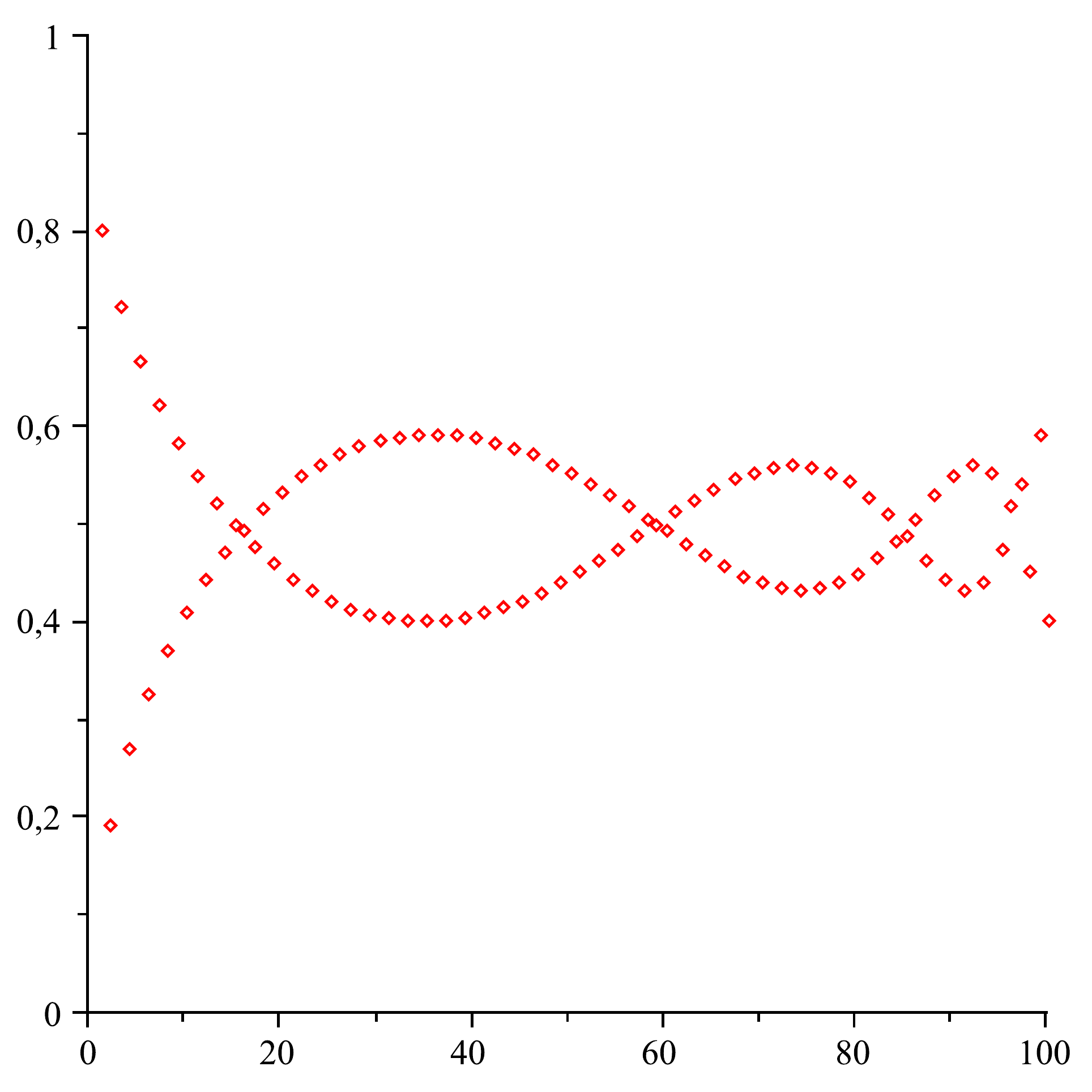}}
\subfigure[$s=7$]{\includegraphics[width=.4 \textwidth,height=.2 \textheight]{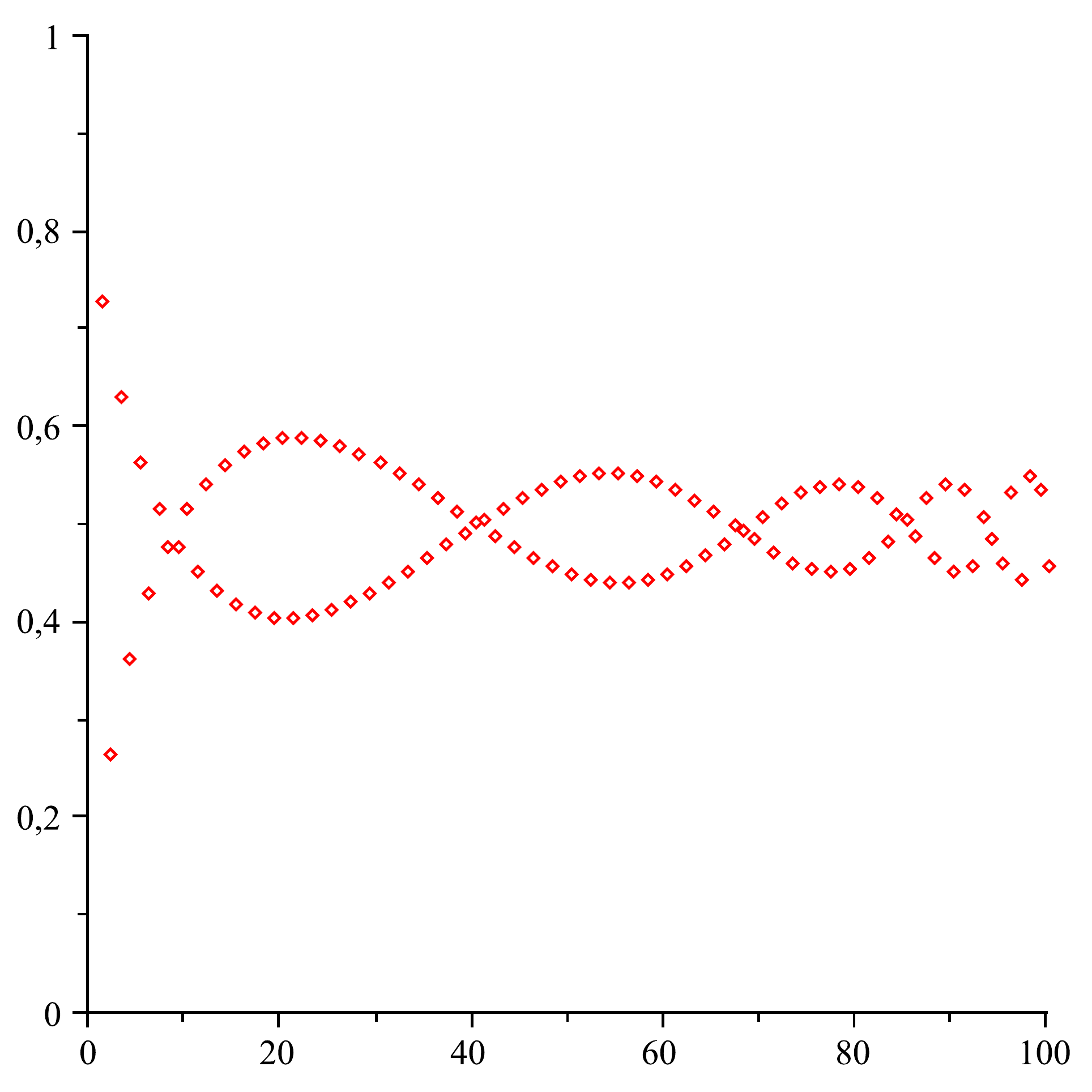}}
\caption{The leading order term in the limiting mean density in case $k=2$, $\alpha_j=\beta=2/3$, $N=50$ and four different values of $s$.  For each point $\xi N+x$, with $0 \leq x <k$, on the horizontal axis  the density is computed by using \eqref{kernel3c} locally for each value of $\xi=0,1/50,2/50,\ldots,1$.}
\label{fig:ex1}
\end{center}
\end{figure} 
\begin{figure}[t]
\begin{center}
\subfigure[$s=1$]{\includegraphics[scale=0.26]{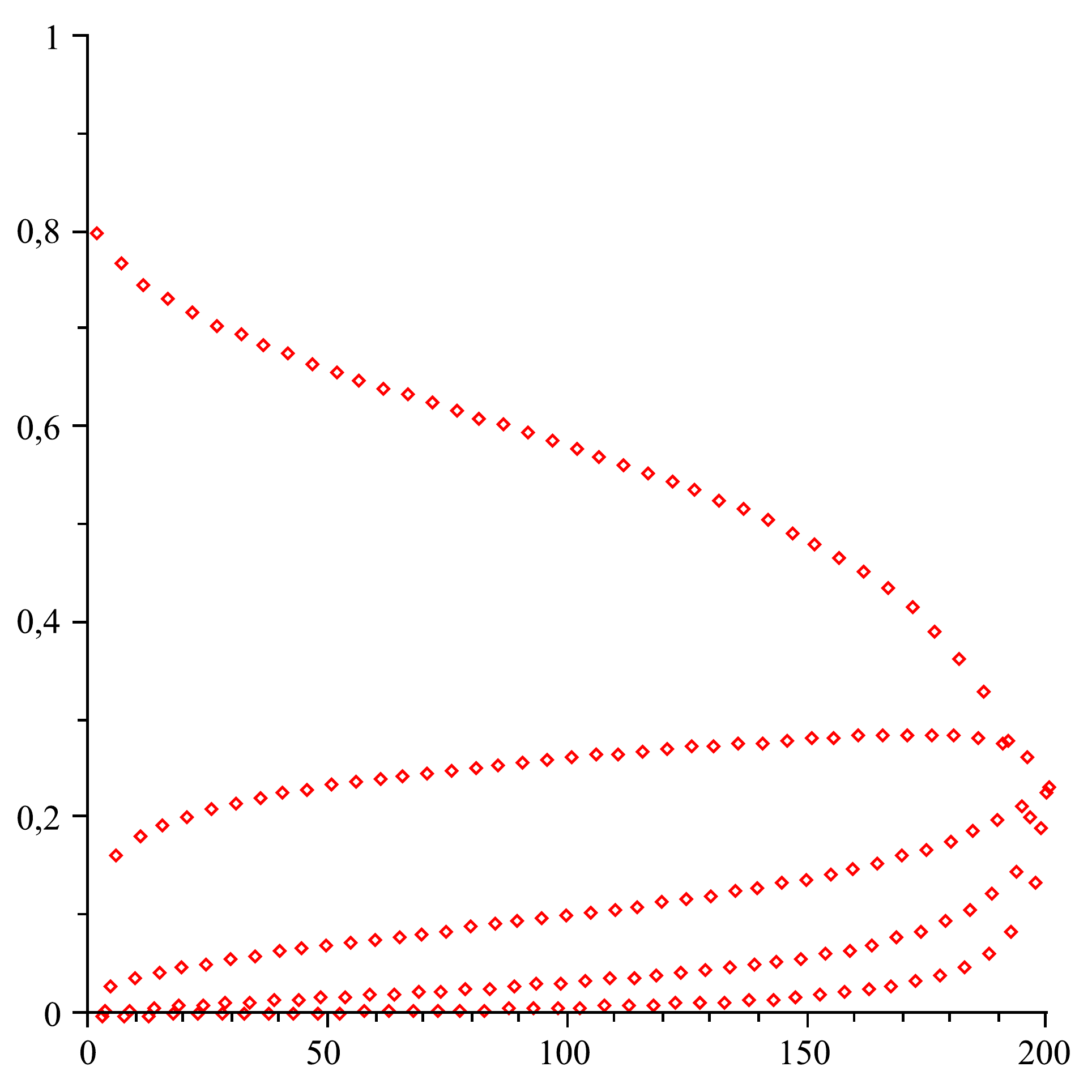}}
\subfigure[$s=5$]{\includegraphics[scale=0.26]{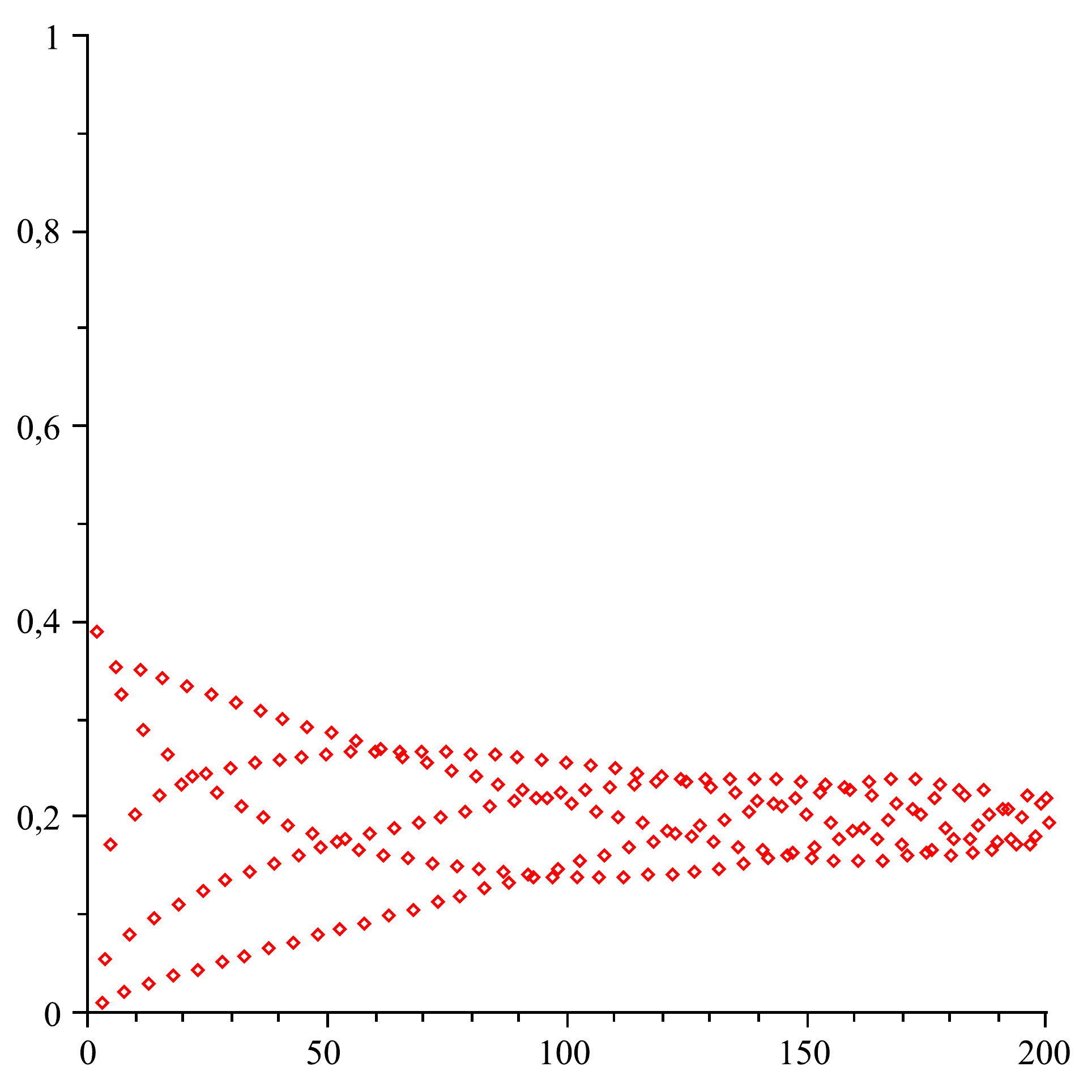}}
\caption{The leading order term in the limiting mean density in case $k=5$, $\alpha_j=\beta=2/3$, $N=40$ and two different values of $s$. Again, for each point $\xi N+x$, with $0 \leq x <k$,  on the horizontal axis  the density is computed by using \eqref{kernel3c} locally for each value of $\xi=0,1/40,2/40,\ldots,1$.}
\label{fig:ex2}
\end{center}
\end{figure} 
\end{remark}
\begin{remark}
There is a continuous time analogue for the kernel in \eqref{kernel3a}. Indeed, if we set $\gamma_j=1/S$, $s_j=\sigma_j S$ and take the limit $S \to \infty$ the kernel  converges to the kernel
\beq \label{kernel3cont}
\begin{split}
-\frac{{\mathbb 1}_{\sigma_1>\sigma_2}}{2\pi i} \oint_{\Gamma_0}  \frac{e^{(\sigma_1-\sigma_2)z}\ dz}{z^{x_1-x_2+1}}+\sum_{j=0}^{k-1} \frac{\omega_k^{-j x_1}}{2\pi i} \int_{e^{-\pi i/k}}^{e^{\pi i/k}} \frac{e^{ (\omega_k^j \sigma_1 - \sigma_2)z}\ dz}{z^{x_1-x_2+1}}.
\end{split}
\eeq

This kernel can be obtained by a limit similar to the one in Theorem \ref{th3} if we modify the model in the following way. Take the same conditions on the starting points and endpoints. Instead of geometric transition probabilities, we equip every walker with an exponential clock  and when the clock rings it jumps up by one and the clock resets. In the left half the jump by one is upwards and in the right half it is downwards.  Starting from the nonintersecting path model, the equivalent of Theorem \ref{th3} is with the kernel \eqref{kernel3cont} instead of $\bbK^\gamma_k$ in \eqref{kernel3a}. 
\end{remark}

\subsection{Limits of the new kernel $\mathbb K_k^\gamma$}

We will now present three limits of our family of kernels \eqref{kernel3a}. 

In the first limit we take the process away from the initial starting points and enter the  bulk region.  As it turns out, the kernel converges to  the (discrete) sine kernel. In fact, by tuning the parameters $\gamma_j$ we obtain a certain family of extensions of the sine kernel  that we will  describe now. Recall that the discrete sine kernel is given by 
\beq
K_{sine}^c(x,y)=\frac{\sin c(x-y)}{\pi(x-y)}=\frac{1}{2\pi i} \int_{e^{-ic}}^{e^{ic}} \frac{1}{z^{x-y+1}} dz, \qquad x,y\in \bbZ,
\eeq
where $c\in (0,\pi]$. Given a sequence $\{\gamma_j\}_j\subset (0,\infty)$  we can define an extension of the sine process by 
\beq\label{eq:extsinekernel}
K_{sine,ext}^{c,\gamma}(s,x;t,y)=
\begin{cases}
\frac{1}{2\pi i} \int_{\Gamma^+(e^{-ic},e^{ic})}  {\prod_{j=s+1}^t (1-\gamma_jz)} \frac{1}{z^{x-y+1} }dz, & s\leq t \\
\frac{1}{2\pi i} \int_{\Gamma^-({e^{-ic}},e^{ic})}  {\prod_{j=t+1}^s (1-\gamma_jz)^{-1}}  \frac{1}{z^{x-y+1}} dz, & s> t
\end{cases}
\eeq
where $\Gamma^+(e^{-ic},e^{ic})$ is a path from  $e^{-ic}$ to $e^{ic}$ that intersects the real axis once, at a point to the right of zero, and  $\Gamma^-(e^{-ic},e^{ic})$ goes from $e^{-ic}$ to $e^{ic}$ and intersects the real axis to the left of zero. The extensions \eqref{eq:extsinekernel} are a subclass of a more general family of extensions that are introduced in \cite{Bor}. Also note that if $\gamma_j\equiv \gamma$, then this is the incomplete  beta kernel as discussed in \cite{OR}.

\begin{proposition}\label{prop1}
Fix a sequence of positive numbers, $\gamma=(\gamma_1, \gamma_2, \ldots)$, and a number $0<\gamma<1$. For any $S \in \bbN$, let $\gamma_S=(\underbrace{\gamma,\ldots,\gamma}_{S \textrm{ times}}, \gamma_1,\gamma_2,\ldots)$.
Then
\beq
\lim_{S\to \infty} \bbK_k^{\gamma_S}  (S+s_1,x_2;S+s_2,x_2)=K_{sine,ext}^{\pi/k,\gamma} (s_1,x_1;s_2,x_2),
\eeq
for $(s_j,x_j)\in \bbN\times \bbZ$, where the right-hand side is the extension of the sine kernel as given in \eqref{eq:extsinekernel}.
\end{proposition}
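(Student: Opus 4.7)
My strategy is to plug the sequence $\gamma_S$ into formula \eqref{kernel3a}, extract the $S$-dependent factors, and identify the limit via dominated convergence combined with a contour deformation. Denote by $\tilde\gamma$ the shifted sequence, so that $\tilde\gamma_r=\gamma$ for $r\leq S$ and $\tilde\gamma_{S+r}=\gamma_r$ otherwise. The indicator term of $\bbK_k^{\gamma_S}(S+s_1,x_1;S+s_2,x_2)$ depends only on $\tilde\gamma_{S+s_2+1},\ldots,\tilde\gamma_{S+s_1}$, which equal $\gamma_{s_2+1},\ldots,\gamma_{s_1}$, and is therefore already $S$-independent. In the sum term, the product separates as
\[
\frac{\prod_{r=1}^{S+s_2}(1-\tilde\gamma_rz)}{\prod_{t=1}^{S+s_1}(1-\omega_k^j\tilde\gamma_tz)}=\left(\frac{1-\gamma z}{1-\omega_k^j\gamma z}\right)^{\!S}\,\frac{\prod_{r=1}^{s_2}(1-\gamma_rz)}{\prod_{t=1}^{s_1}(1-\omega_k^j\gamma_tz)},
\]
isolating all $S$-dependence as a single $S$-th power.

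For $j=0$ that ratio equals $1$, so the $j=0$ summand is automatically $S$-independent. For $j\in\{1,\ldots,k-1\}$, I would deform the integration path (without crossing any poles, which requires a mild argument since the real poles $1/\gamma_t$ could sit inside the unit disk if some $\gamma_t>1$) to the arc $\{e^{i\theta}:|\theta|\leq\pi/k\}$ of the unit circle; the complex poles $\omega_k^{-j}/\gamma$ and $\omega_k^{-j}/\gamma_t$ have argument $-2\pi j/k\notin[-\pi/k,\pi/k]$, while $\gamma<1$ places $1/\gamma$ outside the disk, so the deformation picks up no residues. On this arc a direct calculation gives
\[
|1-\gamma e^{i\theta}|^2-|1-\omega_k^j\gamma e^{i\theta}|^2=2\gamma\bigl[\cos(\theta+2\pi j/k)-\cos\theta\bigr],
\]
and for $|\theta|<\pi/k$ and $1\leq j\leq k-1$ the angle $\theta+2\pi j/k$ modulo $2\pi$ sits at distance at least $\pi/k$ from $0$, hence $\cos(\theta+2\pi j/k)<\cos\theta$. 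Thus $|(1-\gamma z)/(1-\omega_k^j\gamma z)|<1$ on the open arc, its $S$-th power tends pointwise to $0$ bounded by $1$, and dominated convergence kills the $j\neq 0$ contributions. Verifying that the contour may genuinely be deformed to a curve on which the modulus is strictly less than $1$ at every point is the main technical hurdle.

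What remains is the $j=0$ summand, namely
\[
\frac{1}{2\pi i}\int_{\Gamma^+(e^{-i\pi/k},e^{i\pi/k})}\frac{\prod_{r=1}^{s_2}(1-\gamma_rz)}{\prod_{t=1}^{s_1}(1-\gamma_tz)}\,\frac{dz}{z^{x_1-x_2+1}}.
\]
If $s_1\leq s_2$ the ratio collapses to $\prod_{r=s_1+1}^{s_2}(1-\gamma_rz)$, matching $K_{sine,ext}^{\pi/k,\gamma}(s_1,x_1;s_2,x_2)$ verbatim. If $s_1>s_2$ the ratio becomes $\prod_{r=s_2+1}^{s_1}(1-\gamma_rz)^{-1}$, and this must be combined with the surviving indicator term $-\frac{1}{2\pi i}\oint_{\Gamma_0}\prod_{r=s_2+1}^{s_1}(1-\gamma_rz)^{-1}\,dz/z^{x_1-x_2+1}$. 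A brief orientation check shows that the closed loop $\Gamma^--\Gamma^+$ winds once clockwise around $z=0$ (and no other singularity, once $\Gamma^-$ is drawn close enough to $\Gamma^+$), so Cauchy's theorem yields $\int_{\Gamma^-}=\int_{\Gamma^+}-\oint_{\Gamma_0}$. Substituting this identity turns the sum of the two surviving pieces into the $\Gamma^-$-integral that defines $K_{sine,ext}^{\pi/k,\gamma}(s_1,x_1;s_2,x_2)$ for $s_1>s_2$, completing the proof.
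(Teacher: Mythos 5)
Your proof is correct, and it reaches the same limit by the same decomposition as the paper: the $j=0$ summand plus the indicator term is exactly $K_{sine,ext}^{\pi/k,\gamma}$, and the $j\neq 0$ summands vanish as $S\to\infty$. The differences are in how each step is handled. For the vanishing of the $j\neq 0$ terms, the paper proves a quantitative estimate (Lemma~\ref{angle-lemma}: $\bigl|\tfrac{1-\gamma e^{it/k}}{1-\omega_k^j\gamma e^{it/k}}\bigr|\leq\bigl(1+\tfrac{C\pi j(\pi j + t)}{k^2}\bigr)^{-1}$) and integrates the $S$-th power of that bound, distinguishing the $j=\pm 1$ terms ($O(S^{-1})$) from the rest (exponentially small). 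You instead establish the pointwise strict inequality $|(1-\gamma z)/(1-\omega_k^j\gamma z)|<1$ on the open arc and invoke dominated convergence; this is softer (no rate) but shorter, and fully sufficient for Proposition~\ref{prop1} on its own. The paper's harder estimate is not gratuitous, though: it is reused verbatim in the proof of Proposition~\ref{prop3}, so a reader following your route would need it there anyway. Conversely, you make explicit something the paper only asserts, namely that the indicator term over $\Gamma_0$ combines with the $j=0$ integral over $\Gamma^+$ to produce the $\Gamma^-$-integral in the $s_1>s_2$ branch of \eqref{eq:extsinekernel}; your orientation check $\int_{\Gamma^-}=\int_{\Gamma^+}-\oint_{\Gamma_0}$ is the right identity. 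One small imprecision: your parenthetical worry about ``the real poles $1/\gamma_t$'' sitting inside the disk is moot for $j\neq 0$, because those factors appear in the denominator only as $1-\omega_k^j\gamma_t z$, whose zeros $\omega_k^{-j}/\gamma_t$ have argument $-2\pi j/k\pmod{2\pi}$ and are therefore bounded away from the positive real axis (by at least $2\pi/k$); the deformation from the prescribed path to the arc stays in the sector $|\arg z|\leq\pi/k$, which is pole-free for $j\neq 0$. So the concern dissolves, and your conclusion that no residues are picked up is correct.
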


\begin{remark}
We have a similar theorem for the continuous analogue \eqref{kernel3c}. By setting $\sigma_j=\sigma+\tilde \sigma_j$ and taking the limit $\sigma \to \infty$ we obtain a different extension of the sine kernel that also falls into the class described in \cite{Bor}.
\end{remark}

The second limit that we compute  is for $k\to \infty$. In this case the starting points for the paths are far apart. 

\begin{proposition}\label{prop2}
 Let $\bbK_k^\gamma$ be the kernel in \eqref{kernel3a}, with $\gamma_j \in (0,1)$ for all $j \in \bbN$.  Then 
\beq
\begin{split}
\lim_{k\to \infty}&\ \frac{ \prod_{r=1}^{s_1}{(1-\gamma_r )}}
{\prod_{r=1}^{s_2}{(1-\gamma_r )}}\ \bbK_k^{\gamma}(s_1,x_1;s_2,x_2)
\\&\quad =-\frac{{\mathbb 1}_{s_1>s_2}  \prod_{j=s_2+1}^{s_1} (1-\gamma_j)}{2\pi i} \oint_{\partial \mathbb D} \prod_{j=s_2+1}^{s_1} (1-\gamma_j w)^{-1} \frac{dw}{w^{x_1-x_2+1}}\\
&\qquad +{\frac{\prod_{r=1}^{s_1}(1- \gamma_r)}{2\pi i}\int_{\partial \mathbb D} }{\prod_{t=1}^{s_1}{(1-\gamma_t  w)^{-1}}} \ \frac{dw}{w^{x_1+1}}.
\end{split}
\eeq

Namely, as $k\to \infty$, the process induced by the kernel $\bbK_k^{\gamma}$ converges to a random walk of a single particle where the transition probabilities are given by the geometric distribution, with parameters $\gamma_j$. 
\end{proposition}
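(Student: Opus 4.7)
\bigskip

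The plan is to show that the prefactor $\prod_{r=1}^{s_1}(1-\gamma_r)/\prod_{r=1}^{s_2}(1-\gamma_r)=\prod_{r=s_2+1}^{s_1}(1-\gamma_r)$ transforms each of the two pieces of $\bbK_k^\gamma$ separately into the correct limiting object. For the first (residue) piece, observe that since $\gamma_r\in(0,1)$ the poles $1/\gamma_r$ lie outside the closed unit disk, and the only pole of the integrand inside $\partial\mathbb{D}$ is the one at $z=0$. Hence $\Gamma_0$ may be deformed to $\partial\mathbb{D}$, and multiplication by $\prod_{r=s_2+1}^{s_1}(1-\gamma_r)$ produces the first term on the right-hand side of the proposition directly, with no dependence on $k$.

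The substantive step is the sum. In the $j$-th term we perform the change of variables $w=\omega_k^j z$. The contour becomes an arc $C_j^{*}$ from $e^{(2j-1)\pi i/k}$ to $e^{(2j+1)\pi i/k}$ that crosses the real axis at $\omega_k^j$ times a point of modulus less than $\min_r 1/\gamma_r$. Using the algebraic identity $\omega_k^{-jx_1}\cdot\omega_k^{-j}\cdot\omega_k^{j(x_1-x_2+1)}=\omega_k^{-jx_2}$, the $j$-th summand reduces to
\begin{equation*}
\frac{\omega_k^{-jx_2}}{2\pi i}\int_{C_j^{*}}\frac{\prod_{r=1}^{s_2}(1-\gamma_r\omega_k^{-j}w)}{\prod_{t=1}^{s_1}(1-\gamma_t w)}\,\frac{dw}{w^{x_1-x_2+1}}.
\end{equation*}
The integrand's only poles are at $w=0$ and at $w=1/\gamma_t>1$, so each $C_j^{*}$ can be deformed to the unit-circle arc $\widetilde C_j=\{e^{i\theta}:(2j-1)\pi/k\le\theta\le(2j+1)\pi/k\}$ without crossing a pole. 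For $j=0,1,\ldots,k-1$ these arcs partition $\partial\mathbb{D}$.

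On $\widetilde C_j$, write $w=e^{i(2\pi j/k+\varphi)}$ with $|\varphi|\le\pi/k$. Then $\omega_k^{-j}w=e^{i\varphi}=1+O(1/k)$ and $\omega_k^{-jx_2}=w^{-x_2}e^{ix_2\varphi}=w^{-x_2}(1+O(1/k))$, with constants depending only on the fixed quantities $x_2,s_1,s_2,\{\gamma_r\}$. Consequently, as a function on $\partial\mathbb{D}$, the integrand on $\widetilde C_j$ converges uniformly in $k$ to
\begin{equation*}
\frac{1}{2\pi i}\,\frac{\prod_{r=1}^{s_2}(1-\gamma_r)}{\prod_{t=1}^{s_1}(1-\gamma_t w)}\,\frac{1}{w^{x_1+1}}.
\end{equation*}
Uniform convergence on the compact contour $\partial\mathbb{D}$ allows interchange of limit and integration, and then multiplying by $\prod_{r=1}^{s_1}(1-\gamma_r)/\prod_{r=1}^{s_2}(1-\gamma_r)$ yields the second term on the right-hand side.

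The main obstacle is essentially cosmetic: one must check that the algebraic simplification giving $\omega_k^{-jx_2}$ is exact, and that the approximation on $\widetilde C_j$ is uniform with an error that remains $O(1/k)$ after summing across all $k$ arcs of length $2\pi/k$; boundedness of the integrand (uniform in $k$ and $j$) on the union of arcs makes this routine. In spirit the sum over $j$ is a Riemann sum for the boundary integral, and the identity $\omega_k^{-jx_1}\omega_k^{j(x_1-x_2)}=\omega_k^{-jx_2}$ is precisely what converts the discrete factor $\omega_k^{-jx_2}$ into the continuous factor $w^{-x_2}$ along the arc, which is the reason the limit takes the clean form of the mean density $\tfrac{1}{2\pi i}\int_{\partial\mathbb{D}}\prod_{t=1}^{s_1}(1-\gamma_t)/(1-\gamma_t w)\,dw/w^{x_1+1}$ of a single geometric random walker.
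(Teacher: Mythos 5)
Your proof is correct, and it takes a genuinely different route from the paper's. The paper parametrizes each arc integral, invokes the mean value theorem to pull out the value of the integrand at some $\theta_j\in[-\pi/k,\pi/k]$, observes that $\theta_j\to 0$ uniformly, and then identifies the $\frac{1}{k}\sum_j \omega_k^{-jx_1}\,\prod(1-\gamma_r)/\prod(1-\omega_k^j\gamma_t)$ that remains as a Riemann sum for $\frac{1}{2\pi i}\oint_{\partial\mathbb D}\,\cdot\,dw/w$. You instead rotate the $j$-th arc by $\omega_k^j$, check that the rotation factors combine exactly to $\omega_k^{-jx_2}$, deform each rotated arc onto the unit-circle arc $\widetilde C_j$ (legitimate since the poles $1/\gamma_t>1$ stay outside the lens), and then note that the arcs tile $\partial\mathbb D$ while the $j$- and $k$-dependence of the integrand is uniformly $1+O(1/k)$. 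This gives a single honest contour integral over $\partial\mathbb D$ in place of the paper's pointwise Riemann sum. One thing your version buys is that you sidestep the paper's appeal to the mean value theorem, which as stated applies to a complex-valued integrand and therefore needs to be read as the weaker (but sufficient) claim that $\frac{k}{2\pi}\int_{-\pi/k}^{\pi/k}g(\theta)\,d\theta-g(0)\to 0$ uniformly in $j$; your uniform-convergence-on-arcs argument makes exactly that estimate explicit. Both routes then obtain the same limiting kernel, and the identification of the limit process as a single geometric random walker is a separate determinant manipulation (rank-one plus strictly lower-triangular) which you do not reproduce but which the paper records in the lines following the kernel computation.
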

\begin{remark}
Without the restriction $\gamma_j <1$, the limiting process makes no sense as a probability measure. Note, in addition, that in Theorem \ref{th3} we have  $\gamma_j=(\xi/(1-\xi))^{1/k} \beta \alpha_j$ which converges to $\alpha_j \beta<1$ as $k\to \infty$. Thus, $\gamma_j<1$ is also natural from the viewpoint of the original process. Moreover, it is not hard to adapt the proof of Proposition \ref{prop2} for $\gamma_j$ that vary with $k$ in this way. \end{remark}
\begin{remark}
The conclusion that the process converges to a single random walk follows by a simple manipulation (see \eqref{eq:detmanip}) on the determinant in the $k$-point correlations \eqref{eq:detprocess}. Note that if $s_j=s$  the kernel clearly only depends on $x_1$ and not on $x_2$. This implies that $K$, viewed as an infinite matrix, has rank $1$ and hence the limiting process consists of a single particle only \cite[Theorem 4]{Sosh}. \end{remark}

The limits in  Propositions \ref{prop1} and \ref{prop2} are rather straightforward. A more interesting limit is $k,S\to \infty$ simultaneously. That is, we want to scale $S$ with (a power of) $k$ and take the limit $k\to \infty$. We restrict the discussion to the single time situation $s_1=s_2$ and take $\gamma_j \equiv \gamma$.  

In order to determine the proper scaling for $S$, consider a single random walker with geometrically distributed  transition probabilities with parameter $\gamma$. The mean position of the walker after $S$ steps is $S \gamma/(1-\gamma)$. The variance is $S \gamma^2/(1-\gamma)^2$. It follows that two independent random walkers starting at a distance $k$ will start noticing each other after approximately $k^2$ steps. Thus, the correct scaling is $S=\sigma k^2$, where the scaling parameter $\sigma$ does not depend on $k$. 

In order to obtain a meaningful limit, we want to scale the space variable, $x_j$, as well. The up-right paths force a drift in the random walks. Hence, if we want to \lq follow\rq \ the process as $k\to \infty$, we need to subtract the mean position of the walker. Moreover, the variance implies we need to scale the space variables with $k$. Hence 
\beq \no
x_j= \textrm{integer part of  } \Big(\sigma k^2 \gamma/(1-\gamma) +k \eta_j\big), \qquad j=1,2,
\eeq
where $\eta_j\in \bbR$.  Finally, because of the scaling with $k$ in the space variable, we also need to multiply the kernel $\bbK_k^\gamma$ with $k$  to get a meaningful answer.

\begin{proposition} \label{prop3}
Consider $\bbK_k^\gamma$ in \eqref{kernel3a} with $\gamma_j \equiv \gamma\in (0,1)$ and  
\beq \label{eq:newvariableeta}
\begin{cases}
s_j=s=\sigma k^2\\
x_j= \textrm{integer part of  } \Big(\sigma k^2 \gamma/(1-\gamma) +k \eta_j\Big).
\end{cases}\eeq
We have
\beq \label{kernelJoh}
\lim_{k\to \infty} k \ \bbK_k^\gamma (s,x_1;s,x_2)
=\frac{1}{\pi} \sum_{j=-\infty}^\infty   e^{(- \pi d j(j-1))}
\Re \left(\frac{ \exp\left(\pi i((2j-1)\eta_1+\eta_2)\right)}{i (\eta_2-\eta_1)+dj}\right),
\eeq
where $d=2\pi \sigma\gamma/(1-\gamma)^2  $.
\end{proposition}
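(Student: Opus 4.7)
Starting from the representation \eqref{kernel3a} with $s_1=s_2=s=\sigma k^2$ and $\gamma_j\equiv\gamma$, the indicator term vanishes and the integrand simplifies to $((1-\gamma z)/(1-\omega_k^j\gamma z))^{\sigma k^2}z^{-(x_1-x_2+1)}$. I would parametrize the contour as the arc $z=e^{iu/k}$, $u\in(-\pi,\pi)$, which is permissible since $1/\gamma>1$. After the change of variables one gets
\begin{align*}
k\,\bbK_k^\gamma(s,x_1;s,x_2)=\frac{1}{2\pi}\sum_{j=0}^{k-1}\omega_k^{-jx_1}\int_{-\pi}^{\pi}\left(\frac{1-\gamma e^{iu/k}}{1-\omega_k^j\gamma e^{iu/k}}\right)^{\sigma k^2}e^{-i(x_1-x_2)u/k}\,du.
\end{align*}
Using the $k$-periodicity of $\omega_k^j$, I would reindex the $j$-sum over $\{-\lfloor k/2\rfloor+1,\ldots,\lfloor k/2\rfloor\}$, so that ``small $|j|$'' corresponds to $\omega_k^j$ close to $1$.

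For each fixed $j\in\bbZ$, I would derive the pointwise limit of the integrand by Taylor expansion. Setting $a=\gamma/(1-\gamma)$ and $\phi(v)=\log(1-a(e^{iv}-1))=-iav+\tfrac{1}{2}a(1+a)v^2+O(v^3)$, one has $\log((1-\gamma e^{iv})/(1-\gamma))=\phi(v)$, and with $d=2\pi\sigma\gamma/(1-\gamma)^2=2\pi\sigma a(1+a)$,
\begin{align*}
\sigma k^2\log\frac{1-\gamma e^{iu/k}}{1-\omega_k^j\gamma e^{iu/k}}=\sigma k^2[\phi(u/k)-\phi((2\pi j+u)/k)]=2\pi ia\sigma kj-\pi dj^2-jdu+o(1).
\end{align*}
The scaling $x_1=\sigma k^2 a+k\eta_1+O(1)$ yields $\omega_k^{-jx_1}=e^{-2\pi ij\sigma ka}\,e^{-2\pi ij\eta_1}(1+o(1))$, and the phase $e^{-2\pi ij\sigma ka}$ cancels the $e^{2\pi ia\sigma kj}$ coming from the exponent above. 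Combined with $e^{-i(x_1-x_2)u/k}\to e^{-i(\eta_1-\eta_2)u}$ uniformly on $(-\pi,\pi)$, the $j$-th integrand converges pointwise to $e^{-2\pi ij\eta_1}e^{-\pi dj^2}e^{-u(jd+i(\eta_1-\eta_2))}$.

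The technical heart of the argument is a uniform tail estimate allowing the interchange of limit and summation. Using the identity
\begin{align*}
|1-\omega_k^j\gamma e^{iu/k}|^2-|1-\gamma e^{iu/k}|^2=4\gamma\sin(\pi j/k)\sin((\pi j+u)/k)
\end{align*}
together with $\sin(\pi|j|/k)\geq 2|j|/k$ for $|j|\leq k/2$ and the analogous lower bound on the second factor for $|j|\geq 2$, one obtains
\begin{align*}
\left|\frac{1-\gamma e^{iu/k}}{1-\omega_k^j\gamma e^{iu/k}}\right|^{\sigma k^2}\leq e^{-c\sigma j^2},\qquad 2\leq|j|\leq k/2,\ u\in(-\pi,\pi),
\end{align*}
for some absolute $c>0$. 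Combined with the trivial bound by $1$ for the few indices $|j|\in\{0,1\}$, this provides the summable dominating sequence $Ce^{-c\sigma j^2}$ needed for dominated convergence. This uniform-in-$(j,u)$ estimate is the main obstacle; once it is in place, all other steps are routine.

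Finally, the limit integral evaluates to $\int_{-\pi}^{\pi}e^{-u(jd+i\xi)}\,du=2\sinh(\pi(jd+i\xi))/(jd+i\xi)$ with $\xi=\eta_1-\eta_2$, so that
\begin{align*}
\lim_{k\to\infty}k\,\bbK_k^\gamma(s,x_1;s,x_2)=\frac{1}{\pi}\sum_{j\in\bbZ}e^{-2\pi ij\eta_1}e^{-\pi dj^2}\frac{\sinh(\pi(jd+i\xi))}{jd+i\xi}.
\end{align*}
Splitting $\sinh$ into exponentials, using $e^{-\pi dj^2}e^{\pm\pi dj}=e^{-\pi dj(j\mp 1)}$, I would reindex $j\to-j$ in the piece with $e^{-\pi dj(j+1)}$ (which becomes $e^{-\pi dj(j-1)}$ via $(-j)(-j+1)=j(j-1)$), and pair the two resulting sums as term-wise complex conjugates to obtain $\tfrac{1}{\pi}\sum_j e^{-\pi dj(j-1)}\Re\{e^{-2\pi ij\eta_1}e^{i\pi\xi}/(jd+i\xi)\}$. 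This matches the right-hand side of \eqref{kernelJoh}, since the expression inside $\Re$ is the complex conjugate of $e^{\pi i((2j-1)\eta_1+\eta_2)}/(i(\eta_2-\eta_1)+dj)$ and hence has the same real part.
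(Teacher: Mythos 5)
Your proof is correct, and the core ingredients coincide with the paper's: the same parametrization $z=e^{iu/k}$ and symmetric reindexing of $j$, the same Taylor expansion of $\log\bigl((1-\gamma e^{iu/k})/(1-\omega_k^j\gamma e^{iu/k})\bigr)$ leading to the paper's approximation \eqref{eq:approxintegrandprop3}, essentially the same trigonometric bound as in Lemma \ref{angle-lemma} (your identity $|1-\omega_k^j\gamma e^{iu/k}|^2-|1-\gamma e^{iu/k}|^2=4\gamma\sin(\pi j/k)\sin((\pi j+u)/k)$ is what underlies \eqref{eq:prop1claima}), and the same $\sinh$ split, $j\mapsto -j$ reindexing, and real-part pairing at the end; I also checked that your final expression is the complex conjugate of the paper's summand, hence has the same real part. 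The one structural difference is how you justify interchanging the $k\to\infty$ limit with the $j$-sum. The paper introduces a cutoff at $|j|=k^{1/4}$: for $|j|\geq k^{1/4}$ each term is $e^{-ck^{1/2}}$ via \eqref{est-k>2} and there are only $O(k)$ of them, while for $|j|\leq k^{1/4}$ the Taylor approximation is shown to hold with a uniform $O(k^{-1/4})$ relative error. You instead establish a single $k$- and $u$-uniform bound $\bigl|(1-\gamma e^{iu/k})/(1-\omega_k^j\gamma e^{iu/k})\bigr|^{\sigma k^2}\leq e^{-c\sigma j^2}$ for $2\leq|j|\leq\lfloor k/2\rfloor$ (plus the trivial bound $1$ for $|j|\leq 1$, valid because the sine product is nonnegative there), which gives a summable dominating sequence on $\mathbb Z$ and lets you invoke dominated convergence directly, needing only fixed-$j$ pointwise convergence from the Taylor expansion. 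Both routes are sound; yours avoids tracking the $k^{1/4}$ threshold and the uniformity of the Taylor error over a growing range of $j$, at the small cost of being a bit careful with the lower bound on $\sin((\pi j+u)/k)$ near the endpoints $u=\pm\pi$ for $|j|\geq 2$ (which you correctly flag by restricting the quadratic exponential bound to $|j|\geq 2$).
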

The limiting kernel  \eqref{kernelJoh} appeared for the first time in \cite[eq (2.20)]{J3}. 

Note that in case  $\sigma$ is large (for $\sigma\to \infty$ we get the continuous sine kernel) then this kernel can be approximated by a simpler kernel  as indicated in \cite{J3}. Indeed, the terms other than $j=0,1$ are exponentially small as $\sigma$ (and hence $d$) is large.  Hence the right-hand side of \eqref{kernelJoh} can be approximated by the two  terms coming from $j=0,1$ which leads to
\beq
\frac{\sin \pi (\eta_1-\eta_2)}{\pi (\eta_1-\eta_2)} +\frac{d \cos \pi(\eta_1+\eta_2)+(\eta_2-\eta_1) \sin \pi (\eta_1+\eta_2)}{\pi (d^2+(\eta_1-\eta_2)^2)}.
\eeq
\subsection{Number variance}

An interesting feature of the kernel \eqref{kernel3a} is the phenomenon of number variance saturation. Let us fix $s_1=s_2=s$ and consider the number variance
\[\Var \left(\textrm{number of points in } [-L,L]\right), \]
that is, the variance of the random variable that counts the number of points in the interval $[-L,L]$ with respect to the process induced by the kernel $\bbK_k^\gamma$ in \eqref{kernel3}.  The number variance can be seen as a measure for the disorder of the system.

An important feature of the sine process is that the number variance for the interval $[-L,L]$ grows logarithmically with $L$ (more precisely, it grows as $\frac{1}{\pi^2}{\log L}$ as $L\to \infty$). In fact, it can be shown that the sine kernel is the kernel with slowest growth rate for the number variance among all translation invariant kernels  \cite{Sosh}.  

In \cite{J3} Johansson showed that, in contrast with the sine process, the number variance for the kernel \eqref{kernelJoh} `saturates'. Namely, it remains bounded as $L\to \infty$.

Due to the nonintersecting condition and the discreteness of the model, number variance saturation occurs for  $\bbK_k^\gamma$ as well. Rather than computing a formula for the limiting behavior for the number variance, we give an easy upper bound for the finite $N$ case. As this upper bound does not depend on $N$, it remains valid when taking the limit $N\to \infty$.

We first note that, by the nonintersecting property, each particle except for the top one can jump a vertical jump of size at most $k$ on the first step. Thus, an interval of size $L$ in the bulk on the line $(-N+1)$, will contain at most $(L+k)/k=L/k+1$ particles and at least $(L-k)/k=L/k-1$ particles. It follows that the number variance for $s=1$ is bounded by $4$. Similarly, up to the line $(-N+s)$, except for the $s$ topmost particles, all particles may increase their height by at most $sk$. Thus, the number variance for the line $-N+s$ is bounded by $4s^2$. Since the bounds are independent of $N$ (for intervals in the bulk), they persist as $N \rightarrow \infty$ and we get number variance saturation for the limiting process, for any fixed $s$. 


\section{Proofs}\label{sec:proofs}
In this section we prove Theorems \ref{generalFormula}, \ref{equalSpacing}, \ref{th3} and Propositions \ref{prop1}, \ref{prop2} and \ref{prop3}. 
\subsection{Proof of Theorem \ref{generalFormula}}
\begin{proof}[Proof of Theorem \ref{generalFormula}]

Let 
\beq \label{phi}
\Phi_j(x)=\frac{1}{2 \pi i} \oint_{\partial \mathbb{D}} z^{k_j} \frac{dz}{z^{x+1}} \qquad j=1,2,\ldots, N,
\eeq
\beq \label{psi}
\Psi_j(x)=\frac{1}{2 \pi i} \oint_{\partial \mathbb{D}} z^{x} \frac{dz}{z^{l_j+1}}  \qquad j=1,2,\ldots, N,
\eeq
and
\beq \label{T}
T_{\ell-1,\ell}(x_1,x_2)=\frac{1}{2 \pi i} \oint_{\partial \mathbb{D}} F_\ell(z) \frac{dz}{z^{x_2-x_1+1}} \qquad \ell=1,2,\ldots ,2N,
\eeq
where
\beq \label{Fell}
F_\ell(z)=\left \{ \begin{array}{cc}  &(1-\alpha_\ell z)^{-1} \quad 1 \leq \ell \leq N \\
&\left(1-\frac{\beta_{2N-\ell+1}}{z} \right)^{-1} \quad N+1 \leq \ell \leq 2N  \end{array} \right. .
\eeq

Note that $\Phi_j(y)$ are delta functions at $x=k_j$ and $\Psi_j(x)$ are delta functions at $l_j$ so they encode the boundary conditions of the walk. Moreover, $T_{\ell-1,\ell}(x_1,x_2)$ is the weight of the jump from $x_1$ on the line $s=\ell-1$ to $x_2$ on the line $s=\ell$. Thus, by the Lindstr\"om-Gessel-Vienot Theorem \cite{J,Stem}, the weight of the family of paths intersecting the line $s=\ell$ at the points $\{x_1^{(\ell)},x_2^{(\ell)},\ldots,x_N^{(\ell)}\}$ is given by 
\beq \label{LGV}
\begin{split}
& \det \left[ \Phi_i \left( x_j^{(0)} \right)\right]_{i,j=1}^N \det \left[T_{0,1} \left(x_i^{(0)},x_j^{(1)} \right) \right]_{i,j=1}^N \ldots \\
& \quad \times \det \left[T_{2N-1,2N}\left(x_i^{(2N-1)},x_j^{(2N)} \right) \right]_{i,j=1}^N \det \left[\Psi_i \left( x_j^{(2N)} \right) \right]_{i,j=1}^N.
\end{split}
\eeq

Thus, by the Eynard-Mehta Theorem \cite{BorDet}, the point process defined by the intersections of the paths with the lines $s=0,1,2,\ldots 2N$ is a determinantal point process whose kernel has the form 
\beq \label{EynardMehtaKernel}
\begin{split}
&K(s_1, x_1; s_2, x_2) =-{\mathbb 1}_{s_1>s_2} \frac{1}{2 \pi i} \oint_{\partial \mathbb{D}} \prod_{j=s_2+1}^{s_1} F_j(z) \frac{dz}{z^{x_1-x_2+1}} \\
& \quad +\sum_{i,j=1}^N \left[G_{i,j}^{-t} \right] \left(\frac{1}{2 \pi i}\oint_{\partial \mathbb{D}} z^{k_i} \prod_{\ell=1}^{s_1} F_{\ell}(z) \frac{dz}{z^{x_1+1}} \right) \\
& \qquad \times \left(\frac{1}{2 \pi i}\oint_{\partial \mathbb{D}} \prod_{\ell=s_2+1}^{2N} F_{\ell}(z) \frac{z^{x_2}dz}{z^{l_j+1}} \right),
\end{split}
\eeq
where the Gramm matrix is given by 
\beq \label{gramm}
G_{i,j}=\frac{1}{2\pi i} \oint_{\partial \mathbb{D}} \prod_{\ell=1}^{2N} F_{\ell}(z) \frac{z^{k_i} dz}{z^{l_j+1}}.
\eeq

Since the correlation functions are determinants of $K$, they are unchanged by replacing the $\Phi_i$'s by functions with the same linear span and the same is true of the  $\Psi_j$'s.  Let 
\beq \label{tildePhi}
\widetilde{\Phi}_j(x)=\frac{1}{2 \pi i}\oint_{\partial \mathbb{D}} h_{k_1,k_2,\ldots,k_N}(\beta;\beta_j \mapsto z) \frac{dz}{z^{x+1}},
\eeq
\beq \label{tildePsi}
\widetilde{\Psi}_j(x)=\frac{1}{2 \pi i} \oint_{\partial \mathbb{D}}  h_{l_1,l_2,\ldots,l_N}(1/\beta;1/\beta_j \mapsto 1/z) \frac{z^x dz}{z}.
\eeq 
Since we are assuming the $\beta_j$'s are distinct, $ h_{k_1,k_2,\ldots,k_N}(\beta;\beta_j \mapsto z)$ is a nontrivial linear combination of $z^{k_\ell}$, namely a polynomial in $z$, which vanishes at $\beta_\ell$ ($\ell \neq j$). Similarly, $h_{l_1,l_2,\ldots,l_N}(1/\beta, 1/\beta_j \mapsto 1/z)$ is a nontrivial linear combination of $z^{-l_\ell}$ which vanishes at $\beta_\ell$ ($\ell \neq j$). Moreover, 
\beq \label{spanTildePhi1}
\textrm{span}\left \{ z^{k_j} \right \}_{j=1}^N =\textrm{span} \left \{ h_{k_1,k_2,\ldots,k_N}(\beta;\beta_j \mapsto z) \right \}_{j=1}^N.
\eeq
Thus,
\beq \label{spanTildePhi2}
\textrm{span}\left \{ \Phi_j(x) \right \}_{j=1}^N =\textrm{span} \left \{ \widetilde{\Phi}_j(x) \right \}_{j=1}^N.
\eeq
Similarly
\beq \label{spanTildePsi1}
\textrm{span}\left \{ \frac{1}{z^{l_j}} \right \}_{j=1}^N =\textrm{span} \left \{ h_{l_1,l_2,\ldots,l_N}(1/\beta;1/\beta_j \mapsto 1/z) \right \}_{j=1}^N.
\eeq
and so 
\beq \label{spanTildePsi2}
\textrm{span}\left \{ \Psi_j(x) \right \}_{j=1}^N =\textrm{span} \left \{ \widetilde{\Psi}_j(x) \right \}_{j=1}^N.
\eeq

Thus, the kernel with $\Phi$ and $\Psi$ replaced by $\widetilde{\Phi}$ and $\widetilde{\Psi}$ describes the same process. Let us compute the Gramm matrix for this new kernel:
\beq \label{grammDiagonal}
\begin{split}
\widetilde{G}_{i,j} & =\frac{1}{2 \pi i} \oint_{\partial \mathbb{D}}   h_{k_1,k_2,\ldots,k_N}(\beta;\beta_i \mapsto z) h_{l_1,l_2,\ldots,l_N}(1/\beta;1/\beta_j \mapsto 1/z)\\
& \times \prod_{r=1}^N \left(1-\alpha_r z \right)^{-1} \left(1-\frac{\beta_r}{z} \right)^{-1}\frac{dz}{z} 
\end{split}
\eeq
The integral is over the unit circle (with counterclockwise orientation) and the integrand is meromorphic. Hence the integral can be computed by residue calculus. The only possible poles that are inside the unit circle are at $z=\beta_j$ for $j=1,\ldots,N$ and at $z=0$. However,  because of the condition $l_N-k_1\leq N-1$ there is no pole at $z=0$. Furthermore, due to the zeros of the functions $h_{k_1,k_2,\ldots,k_N}(\beta, \beta_i\mapsto z)$ and $h_{l_1,l_2,\ldots,l_N}(1/\beta,1/\beta_j\mapsto 1/z)$,  all poles are cancelled in case  $i\neq j$  and the integral vanishes. Hence we obtain
\beq \label{diagonalGramm}
\widetilde{G}_{i,j} =\left \{ \begin{array}{cc} 0 & \quad i \neq j \\
 \frac{h_{k_1,k_2,\ldots,k_N}(\beta)h_{l_1,l_2,\ldots,l_N}(1/\beta) }{\prod_{r=1}^N \left(1-\alpha_r \beta_j \right) \prod_{{s=1},{s \neq j}}^N (1-\beta_s/\beta_j)} & \quad i=j \end{array} \right. .
\eeq
So we see that the Gramm matrix is diagonal in this case. Inserting in \eqref{EynardMehtaKernel}, we get
\beq \label{EMkernel}
\begin{split}
&K(s_1, x_1; s_2, x_2) =-{\mathbb 1}_{s_1>s_2} \frac{1}{2 \pi i} \oint_{\partial \mathbb{D}} \prod_{j=s_2+1}^{s_1} F_j(z) \frac{dz}{z^{x_1-x_2+1}} \\
& \quad +\sum_{j=1}^N  \frac{\prod_{r=1}^N \left(1-\alpha_r \beta_j \right)  \prod_{{s=1},{s \neq j}}^N (1-\beta_s/\beta_j)}{h_{k_1,k_2,\ldots,k_N}(\beta)h_{l_1,l_2,\ldots,l_N}(1/\beta) }\\& \qquad \times \left(\frac{1}{2 \pi i}\oint_{\partial \mathbb{D}} h_{k_1,k_2,\ldots, k_N}(\beta;\beta_j\mapsto z) \prod_{\ell=1}^{s_1} F_{\ell}(z) \frac{dz}{z^{x_1+1}} \right) \\
& \qquad \quad \times \left( \frac{1}{2 \pi i}\oint_{\partial \mathbb{D}}  h_{l_1,l_2,\ldots,l_N}(1/\beta;1/\beta_j \mapsto 1/z) 
 \prod_{\ell=s_2+1}^{2N} F_{\ell}(z) \frac{z^{x_2}dz}{z} \right).
\end{split}
\eeq
and this is \eqref{kernel0}. 

To prove \eqref{kernel1}, we note that since $s_2 \leq N$ and $l_N \leq N-1$ (recall we assume $k_1=0$), we have 
\beq \label{Integration}
\begin{split}
& \frac{1}{2 \pi i}\oint_{\partial \mathbb{D}} h_{l_1,l_2,\ldots,l_N}(1/\beta;1/\beta_j \mapsto 1/z) 
 \prod_{\ell=s_2+1}^{2N} F_{\ell}(z) \frac{z^{x_2}dz}{z} \\
&\quad  =\frac{1}{2 \pi i}\oint_{\partial \mathbb{D}}
 \frac{h_{l_1,l_2,\ldots,l_N}(1/\beta;1/\beta_j \mapsto 1/z) 
}{\prod_{\ell=s_2+1}^N \left(1-\alpha_\ell z \right) \prod_{r=1}^N(1-\beta_r/z)}{z^{x_2-1}dz}\\
&\qquad = \frac{\beta_j^{x_2} h_{l_1,l_2,\ldots,l_N}(1/\beta) }{\prod_{r=1}^N \left(1-\alpha_r \beta_j \right) \prod_{{s=1},{s \neq j}}^N (1-\beta_s/\beta_j)} .
\end{split}
\eeq
Noting that $\frac{\prod_{r=1}^N \left(1-\alpha_r \beta_j \right)}{\prod_{\ell=s_2+1}^N \left(1-\alpha_\ell \beta_j \right)}=\prod_{r=1}^{s_2}(1-\alpha_r \beta_j)$, \eqref{kernel1} follows from inserting \eqref{Integration} into \eqref{EMkernel}.
\end{proof}


\subsection{Proof of Theorem \ref{equalSpacing}}
\begin{proof}[Proof of Theorem \ref{equalSpacing}]

We first assume the $\beta_j$'s are distinct. We may then apply Theorem \ref{generalFormula}. By noting that 
$h_{0,k,2k,\ldots,(N-1)k}(\beta; \beta_j \mapsto z)=C \prod_{\ell=1, \ell \neq j}^N(z^k-\beta_\ell^k)$ for a constant $C$ that is independent of $z$, we see that 
\beq \no
\frac{h_{0,k,2k,\ldots,(N-1)k}(\beta; \beta_j \mapsto z)}{h_{0,k,2k,\ldots,(N-1)k}(\beta)}=\frac{\prod_{\ell=1, \ell \neq j}^N \left(z^k-\beta_\ell^k \right)}{\prod_{\ell=1, \ell \neq j}^N \left(\beta_j^k-\beta_\ell^k \right)},
\eeq
and so
\beq \label{EMkernelApplied}
\begin{split}
&K(s_1, x_1; s_2, x_2) =-{\mathbb 1}_{s_1>s_2} \frac{1}{2 \pi i} \oint_{\partial \mathbb{D}} \prod_{j=s_2+1}^{s_1} F_j(z) \frac{dz}{z^{x_1-x_2+1}} \\
& \quad +\sum_{j=1}^N \frac{1}{2 \pi i}\oint_{\partial \mathbb{D}} \frac{\prod_{\ell=1, \ell \neq j}^N \left(z^k-\beta_\ell^k \right)}{\prod_{\ell=1, \ell \neq j}^N \left(\beta_j^k-\beta_\ell^k \right)} \prod_{r=1}^{s_2}(1-\alpha_r \beta_j)\beta_j^{x_2} \prod_{\ell=1}^{s_1} F_{\ell}(z) \frac{dz}{z^{x_1+1}}\\
&=-{\mathbb 1}_{s_1>s_2} \frac{1}{2 \pi i} \oint_{\partial \mathbb{D}} \prod_{j=s_2+1}^{s_1} F_j(z) \frac{dz}{z^{x_1-x_2+1}} \\
& \quad +\sum_{j=1}^N \frac{1}{2 \pi i}\oint_{\partial \mathbb{D}} \prod_{\ell=1}^N \left(z^k-\beta_\ell^k \right)\prod_{\ell=1}^{s_1} F_{\ell}(z) \frac{\prod_{r=1}^{s_2}(1-\alpha_r \beta_j)\beta_j^{x_2}} {\prod_{\ell=1, \ell \neq j}^N \left(\beta_j^k-\beta_\ell^k \right)\left(z^k-\beta_j^k \right)} \frac{dz}{z^{x_1+1}}.
\end{split}
\eeq

Now, if $\Gamma_{\beta_j}$ is a small circle around $\beta_j$ which does not contain the other zeros of $w^k-\beta_j^k$, then, (recall $w^k-\beta_j^k=(w-\beta_j)(w^{k-1}+\beta_j w^{k-2}+\ldots \beta_j^{k-1}w +\beta_j^{k-1})$),
\beq \label{residue}
\begin{split}
& \frac{k}{2 \pi i} \oint _{\Gamma_{\beta_j}}  \frac{\prod_{r=1}^{s_2}(1-\alpha_r w)w^{x_2} w^{k-1}} {\prod_{\ell=1}^N \left(w^k-\beta_\ell^k \right)\left(z^k-w^k \right)}dw \\
& \quad = \frac{1}{2 \pi i} \oint _{\Gamma_{\beta_j}}  \frac{\prod_{r=1}^{s_2}(1-\alpha_r w)w^{x_2}} {\prod_{\ell=1, \ell \neq j}^N \left(w^k-\beta_\ell^k \right)\left(z^k-w^k \right)}\frac{k w^{k-1}dw}{w^k-\beta_j^k} \\
& \quad = \frac{\prod_{r=1}^{s_2}(1-\alpha_r \beta_j)\beta_j^{x_2}} {\prod_{\ell=1, \ell \neq j}^N \left(\beta_j^k-\beta_\ell^k \right)\left(z^k-\beta_j^k \right)}\frac{k \beta_j^{k-1}}{\beta_j^{k-1}+\beta_j \beta_j^{k-2} +\ldots+\beta_j^{k-1}} \\
& \quad = \frac{\prod_{r=1}^{s_2}(1-\alpha_r \beta_j)\beta_j^{x_2}} {\prod_{\ell=1, \ell \neq j}^N \left(\beta_j^k-\beta_\ell^k \right)\left(z^k-\beta_j^k \right)}.
\end{split}
\eeq

It follows that 
\beq \label{improvedKernel}
\begin{split}
& \sum_{j=1}^N \frac{1}{2 \pi i}\oint_{\partial \mathbb{D}} \prod_{\ell=1}^N \left(z^k-\beta_\ell^k \right)\prod_{\ell=1}^{s_1} F_{\ell}(z) \frac{\prod_{r=1}^{s_2}(1-\alpha_r \beta_j)\beta_j^{x_2}} {\prod_{\ell=1, \ell \neq j}^N \left(\beta_j^k-\beta_\ell^k \right)\left(z^k-\beta_j^k \right)} \frac{dz}{z^{x_1+1}} \\
& \quad = \frac{k}{(2 \pi i)^2} \oint_{\partial \mathbb{D}} \oint_{\Gamma_\beta} \frac{\prod_{j=1}^N (z^k-\beta_j^k)}{\prod_{\ell=1}^{s_1}(1-\alpha_\ell z)} \frac{\prod_{r=1}^{s_2}(1-\alpha_r w)}{\prod_{t=1}^N(w^k-\beta_t^k)} 
\frac{w^{x_2+k-1}dw dz}{z^{x_1+1} (z^k-w^k)},
\end{split}
\eeq
which, together with \eqref{EMkernelApplied}, is \eqref{kernel2}. 

In the case that the $\beta_j$'s are not distinct the same formula holds by continuity. Formula \eqref{kernel3} is immediate from \eqref{kernel2}.
\end{proof}


\subsection{Proof of Theorem \ref{th3}}

\begin{proof}[Proof of Theorem \ref{th3}]

The proof is based on a steepest descent argument (see \cite{O} for a discussion of this technique for kernels with double integral representations). Note that in the double integral representation \eqref{kernel2} we have an interaction term $1/(z^k-w^k)$ instead of the more common $1/(z-w)$. This gives rise to extra complications in the analysis.

We start with the kernel $K$ 
\beq \label{kernel2a}
\begin{split}
& K(s_1,x_1;s_2,x_2)=-{\mathbb 1}_{s_1>s_2} \frac{1}{2 \pi i} \oint_{\partial \mathbb{D}} \prod_{j=s_2+1}^{s_1} (1-\alpha_j z)^{-1}\frac{dz}{z^{x_1-x_2+1}} \\
& \quad +\frac{k}{(2 \pi i)^2} \oint_{\partial \mathbb{D}} \oint_{\Gamma_\beta} \frac{\prod_{r=1}^{s_2}(1-\alpha_r w) (z^k-\beta^k)^N}{\prod_{\ell=1}^{s_1}(1-\alpha_\ell z) (w^k-\beta^k)^N}  
\frac{w^{x_2+k-1}dw dz}{z^{x_1+1} (z^k-w^k)}.
\end{split}
\eeq
By substituting
\beq 
\frac{kw^{k-1}}{z^k-w^k} =\sum_{j=0}^{k-1} \frac{1}{\omega_k^{-j} z-w}, \quad \omega_k=e^{2 \pi i/k},
\eeq 
in \eqref{kernel2a},  the change of variable $\omega_k^{-j}z\mapsto z$ and the rotational invariance of $\partial \mathbb D$ we obtain
\beq \label{kernel2b}
\begin{split}
& K(s_1,x_1;s_2,x_2)=-{\mathbb 1}_{s_1>s_2} \frac{1}{2 \pi i} \oint_{\partial \mathbb{D}} \prod_{j=s_2+1}^{s_1} (1-\alpha_j z)^{-1}\frac{dz}{z^{x_1-x_2+1}} \\
& \quad +\sum_{j=0}^{k-1}\frac{\omega_k^{-jx_1}}{(2 \pi i)^2} \oint_{\partial \mathbb{D}} \oint_{\Gamma_\beta} \frac{\prod_{r=1}^{s_2}(1-\alpha_r w) (z^k-\beta^k)^N}{\prod_{\ell=1}^{s_1}(1 -\omega_k^j\alpha_\ell z) (w^k-\beta^k)^N}  
\frac{w^{x_2}dw dz}{z^{x_1+1} (z-w)}.
\end{split}
\eeq
Replacing $x_j$ with $x(N)+x_j$ gives 
\beq \label{kernel2c}
\begin{split}
& K(s_1,x(N)+x_1;s_2,x(N)+x_2)=-{\mathbb 1}_{s_1>s_2} \frac{1}{2 \pi i} \oint_{\partial \mathbb{D}} \prod_{j=s_2+1}^{s_1} (1-\alpha_j z)^{-1}\frac{dz}{z^{x_1-x_2+1}} \\
& \quad +\sum_{j=0}^{k-1}\frac{\omega_k^{-j x_1}}{(2 \pi i)^2} \oint_{\partial \mathbb{D}} \oint_{\Gamma_\beta} \frac{\prod_{r=1}^{s_2}(1-\alpha_r w)}{\prod_{\ell=1}^{s_1}(1 -\omega_k^j\alpha_\ell z)} 
\frac{w^{x_2}}{z^{x_1+1} (z-w)}  \frac{ (w^k-\beta^k)^{-N}w^{x(N)}}{ (z^k-\beta^k)^{-N} z^{x(N)}} dw dz.
\end{split}
\eeq
Note that  the terms in the integrand that vary with $N$ can be written as
\beq
\frac{z^{x(N)}}{(z^k-\beta^k)^N}= \exp\left(-N\log(z^k-\beta^k)+x(N) \log z\right),
\eeq
and similarly for $w$. Since $x(N)=\xi k N (1+\mathcal O(1/N)$ as $N\to \infty$, in order to derive the leading terms in the asymptotics of \eqref{kernel2c}, we need to find the saddle points of the function $F$ defined by
\beq \label{eq:defF}
F(z)=G(z^k), \quad G(z)=-\log(z-\beta^k) + \xi \log z.
\eeq
Clearly, the saddle points of $F$ are $k$-th roots of the saddle points of $G$. A straightforward calculation shows that $G$ has only one saddle point $\beta^k \xi/(\xi-1)$, which is simple. Hence there are four paths of steepest descent/ascent for $\Re G$ leaving from the saddle point. The paths of steepest descent together form the negative real axis. The paths of steepest ascent start in the saddle point and end in $z=\beta^k$ which is  a singular point of $G$. The saddle points and corresponding points of $F$ can now easily be found by taking the $k$-th roots.  In Figure \ref{fig:saddle} the saddle points and paths of steepest descent/ascent are illustrated in the case $k=4$.

We now deform the contours $\partial \mathbb{D}$ and $\Gamma_\beta$ in the following way.  The contour $\Gamma_\beta$ is deformed to two rays 
\beq
\Gamma_\beta=\bbR_+ e^{\pi i /k}\cup \bbR_+ e^{-\pi i /k},
\eeq
which consist of the paths of steepest descent for $\Re F$, with $F$ as in \eqref{eq:defF}, leaving from the saddle points $z^*=\beta (\xi/(1-\xi))^{1/k} e^{\pi i/k}$ and $\overline{z^*}$.  

We deform $\partial \mathbb D$ to a contour $\Gamma_0$ that consists of the paths of steepest ascent for $\Re F$ leaving from all the saddle points of $F$, (see also Figure \ref{fig:saddle}). Note that every such path starts at a  saddle point and ends at one of the critical point $\omega_k^j\beta$ of $F$. Together they form   a closed contour around the origin. It is important to note that the other singularities $1/\alpha_j$ are not encircled by $\Gamma_0$.

Note that that the contours $\Gamma_0$ and $\Gamma_\beta$ intersect. Hence by deforming the contours we pick up  residues due to the term $1/(z-w)$. As result we obtain an extra single integral in the representation for the kernel
\beq \label{kernel2d}
\begin{split}
& K(s_1,x_1;s_2,x_2)=-{\mathbb 1}_{s_1>s_2} \frac{1}{2 \pi i} \oint_{\partial \mathbb{D}} \prod_{j=s_2+1}^{s_1} (1-\alpha_j z)^{-1}\frac{dz}{z^{x_1-x_2+1}} \\
& \quad +\sum_{j=0}^{k-1}\frac{\omega_k^{-jx_1}}{2 \pi i} \int_{\overline{z^*} }^{z^*}  \frac{\prod_{r=1}^{s_2}(1-\alpha_r z)}{\prod_{\ell=1}^{s_1}(1-\omega_k^{j}\alpha_\ell z) }
\frac{dz}{z^{x_1-x_2+1} }\\
& \quad +\sum_{j=0}^{k-1}\frac{\omega_k^{-j x_1}}{(2 \pi i)^2} \oint_{\Gamma_0} \oint_{\Gamma_\beta} \frac{\prod_{r=1}^{s_2}(1-\alpha_r w)}{\prod_{\ell=1}^{s_1}(1 -\omega_k^j\alpha_\ell z)} 
\frac{w^{x_2}}{z^{x_1+1} (z-w)}  \frac{ (w^k-\beta^k)^{-N}w^{x(N)}}{ (z^k-\beta^k)^{-N} z^{x(N)}} dw dz.
\end{split}
\eeq
Here the integrals from $\overline{z^*}$ to $z^*$ are over the part of $\Gamma_0$ that is contained in $\Gamma_\beta$ or any homotopic deformation of that  path in $\bbC\setminus\{0,\alpha_j^{-1}\}$.

We claim that the double integral is of order $\mathcal O (N^{-1/2})$ as $N\to \infty$ (in fact, it can be shown to be $\mathcal O (N^{-1})$). This  follows by standard steepest descent arguments, so we restrict ourselves to a brief discussion that justifies the claim. The leading term in the asymptotic expansion for the double integral in \eqref{kernel2d} comes from small neighborhoods around the saddle points $z^*$ and $\overline{z^*}$. The other parts give only exponentially small contributions. In each of these neighborhoods one then introduces new local variables. For example, in the parts of the integrals that are both near $z^*$ we introduce the variables $w=z^*+ s N^{-1/2}$ and $z=z^*+t N^{-1/2}$. In these new variables the integrand converges to a Gaussian as $N\to \infty$. Because of the scaling coming from the change of variables, the double integral then behaves like a (double) Gaussian integral multiplied by $N^{-1/2}$. Hence we arrive at the claim.  

It follows that  
\beq\label{kernel2e}
\begin{split}
&\lim_{N\to \infty}  K(s_1,x_1;s_2,x_2)=-{\mathbb 1}_{s_1>s_2} \frac{1}{2 \pi i} \oint_{\partial \mathbb{D}} \prod_{j=s_2+1}^{s_1} (1-\alpha_j z)^{-1}\frac{dz}{z^{x_1-x_2+1}} \\
& \quad +\sum_{j=0}^{k-1}\frac{\omega_k^{-jx_1}}{2\pi i} \int_{\overline{z^*} }^{z^*}  \frac{\prod_{r=1}^{s_2}(1-\alpha_r z)}{\prod_{\ell=1}^{s_1}(1-\omega_k^{j}\alpha_\ell z) }
\frac{dz}{z^{x_1-x_2+1} }.
\end{split}
\eeq
By changing the integration variable to $w=z/|z^*|$  we obtain
\beq \label{kernel2f}
\begin{split}
&\lim_{N\to \infty}  |z^*|^{x_1-x_2}K(s_1,x_1;s_2,x_2)=-{\mathbb 1}_{s_1>s_2} \frac{1}{2 \pi i} \oint_{\partial \mathbb{D}} \prod_{j=s_2+1}^{s_1} (1-\gamma_j w)^{-1}\frac{dw}{w^{x_1-x_2+1}} \\
& \quad +\sum_{j=0}^{k-1}\frac{\omega_k^{-jx_1}}{2\pi i} \int_{\overline{e^{-\pi i /k}} }^{e^{\pi i/k}}  \frac{\prod_{r=1}^{s_2}(1-\gamma_r w)}{\prod_{\ell=1}^{s_1}(1-\omega_k^{j}\gamma_\ell w) }
\frac{dw}{w^{x_1-x_2+1} } 
\end{split}
\eeq
which is the statement.
 \end{proof}

\begin{figure}[t]
\begin{center}
\begin{tikzpicture}[scale=0.5]
\draw[dashed,very thick] (0,0)--(4,4);
\draw[-,very thick] (2,2) .. controls (2.5,1.5) and (3,0.5) .. (2.5,0);
\draw[-,very thick] (2,2) .. controls (1.5,2.5) and (0.5,3) .. (0,2.5);
\draw[dashed,very thick,rotate=90] (0,0)--(4,4);
\draw[-,rotate=90,very thick] (2,2) .. controls (2.5,1.5) and (3,0.5) .. (2.5,0);
\draw[-,rotate=90,very thick] (2,2) .. controls (1.5,2.5) and (0.5,3) .. (0,2.5);
\draw[dashed,rotate=180,very thick] (0,0)--(4,4);
\draw[-,rotate=180,very thick] (2,2) .. controls (2.5,1.5) and (3,0.5).. (2.5,0);
\draw[-,rotate=180,very thick] (2,2) .. controls (1.5,2.5) and (0.5,3) .. (0,2.5);
\draw[dashed,rotate=270,very thick] (0,0)--(4,4);
\draw[-,rotate=270,very thick] (2,2) .. controls (2.5,1.5) and (3,0.5).. (2.5,0);
\draw[-,rotate=270,very thick] (2,2) .. controls (1.5,2.5) and (0.5,3) .. (0,2.5);
\draw[-] (-5,0)--(5,0);
\draw[-] (0,-5)--(0,5);
\fill  (2,2)   circle (0.1cm); 
\fill  (2,-2)   circle (0.1cm); 
\fill  (-2,2)   circle (0.1cm); 
\fill  (-2,-2)   circle (0.1cm);
\fill (0,2.5)    circle (0.1cm); 
\fill (0,-2.5)    circle (0.1cm); 
\fill (2.5,0)    circle (0.1cm); 
\fill (-2.5,0)    circle (0.1cm); 
\end{tikzpicture}
\begin{tikzpicture}[scale=0.5]
\draw[dashed,very thick] (0,0)--(4,4);
\draw[<-,very thick] (1,1)--(1.1,1.1);
\draw[-,very thick] (2,2) .. controls (2.5,1.5) and (3,0.5) .. (2.5,0);
\draw[-,very thick] (2,2) .. controls (1.5,2.5) and (0.5,3) .. (0,2.5);
\draw[->,very thick] (0.6,2.7)--(0.5,2.7);
\draw[<-,very thick] (0.6,-2.7)--(0.5,-2.7);
\draw (3,4) node {$\Gamma_\beta$};
\draw[-,rotate=90,very thick] (2,2) .. controls (2.5,1.5) and (3,0.5) .. (2.5,0);
\draw[-,rotate=90,very thick] (2,2) .. controls (1.5,2.5) and (0.5,3) .. (0,2.5);
\draw (-2.75,-2.75) node {$\Gamma_0$};
\draw[-,rotate=180,very thick] (2,2) .. controls (2.5,1.5) and (3,0.5).. (2.5,0);
\draw[-,rotate=180,very thick] (2,2) .. controls (1.5,2.5) and (0.5,3) .. (0,2.5);
\draw[dashed,rotate=270,very thick] (0,0)--(4,4);
\draw[->,very thick] (1,-1)--(1.1,-1.1);
\draw[-,rotate=270,very thick] (2,2) .. controls (2.5,1.5) and (3,0.5).. (2.5,0);
\draw[-,rotate=270,very thick] (2,2) .. controls (1.5,2.5) and (0.5,3) .. (0,2.5);
\draw[-] (-5,0)--(5,0);
\draw[-] (0,-5)--(0,5);
\fill  (2,2)   circle (0.2cm); 
\draw (3,2) node {$z^*$};
\fill  (2,-2)   circle (0.2cm); 
\draw (3,-2) node {$\overline{z^*}$};
\fill  (-2,2)   circle (0.1cm); 
\fill  (-2,-2)   circle (0.1cm);
\fill (0,2.5)    circle (0.1cm); 
\fill (0,-2.5)    circle (0.1cm); 
\fill (2.5,0)    circle (0.1cm); 
\fill (-2.5,0)    circle (0.1cm); 
\end{tikzpicture}
\end{center}
\caption{The left picture shows the  paths of steepest descent and ascent for $\Re F$ leaving from the saddle points of $F$. The right picture shows the deformed contours $\Gamma_0$ and $\Gamma_\beta$ which consist of paths of steepest descent and ascent leaving from the saddle points $z^*=\beta (\xi/(1-\xi))^{1/k} e^{\pi i/k}$ and $\overline{z^*}$}
\label{fig:saddle}
\end{figure}
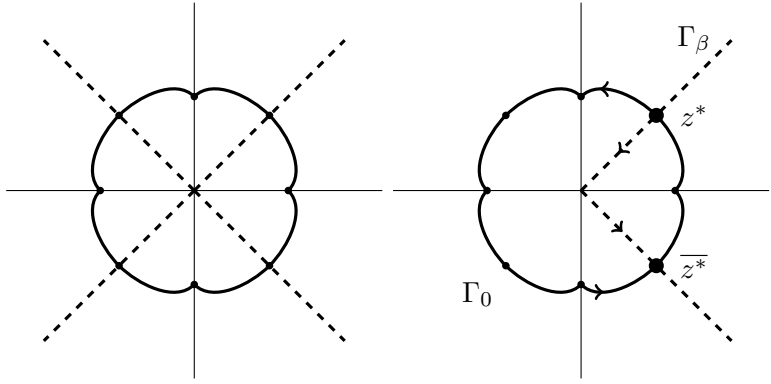

 \subsection{Proof of Proposition \ref{prop1}}
 
We need a simple lemma.
\begin{lemma} \label{angle-lemma}
Fix $k \geq 3$ and $0<\gamma<1$ and let $\omega_k=e^{2\pi i/k}$. Then there exists a constant $C=C(\gamma,k)>0$ such that
\beq \label{eq:prop1claim}
\left|\frac{1-\gamma e^{it/k}}{1-\omega^j_k \gamma e^{it/k}}\right|\leq \frac{1}{1+\frac{C\pi j(\pi j+t)}{k^2}},  \qquad t\in [-\pi,\pi], \quad  |j|\leq k/2.
\eeq 
\end{lemma}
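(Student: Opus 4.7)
The plan is to square both sides (since both ratios in question are positive) and transform the inequality into a statement about the ratio $\frac{|1-\omega_k^j\gamma e^{it/k}|^2}{|1-\gamma e^{it/k}|^2}$. Using the standard identity $|1-\gamma e^{i\alpha}|^2=1-2\gamma\cos\alpha+\gamma^2$, the claim reduces to
\[
\frac{1-2\gamma\cos(\theta+\phi)+\gamma^2}{1-2\gamma\cos\theta+\gamma^2}\;\geq\;\Big(1+\tfrac{C\pi j(\pi j+t)}{k^2}\Big)^{\!2},\qquad \theta=\tfrac{t}{k},\ \phi=\tfrac{2\pi j}{k}.
\]
The key algebraic step is the sum-to-product identity $\cos\theta-\cos(\theta+\phi)=2\sin(\phi/2)\sin(\theta+\phi/2)$, which lets me rewrite the left-hand side as $1+\frac{4\gamma\sin(\phi/2)\sin(\theta+\phi/2)}{1-2\gamma\cos\theta+\gamma^2}$.

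Next I would check that this correction term is nonnegative by a simple sign analysis: for $1\leq j\leq k/2$ one has $\phi/2=\pi j/k\in(0,\pi/2]$ and $\theta+\phi/2\in[\pi(j-1)/k,\pi(j+1)/k]\subset[0,\pi)$, so both sines are nonnegative; for $-k/2\leq j\leq -1$ both are nonpositive, hence the product is again nonnegative. The case $j=0$ gives equality.

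Then I would quantify the lower bound using Jordan's inequality $|\sin x|\geq \tfrac{2}{\pi}|x|$ on $[-\pi/2,\pi/2]$, together with a slight extension to handle the range $|\theta+\phi/2|\leq \pi/2+\pi/k\leq 2\pi/3$ (which is where $k\geq 3$ enters): on $[0,2\pi/3]$ one still has $\sin x\geq c_0\,x$ with an absolute constant $c_0>0$. This yields
\[
\sin(\phi/2)\sin(\theta+\phi/2)\;\geq\;c_0^2\cdot\frac{|\pi j|\cdot|\pi j+t|}{k^2}\;=\;c_0^2\cdot\frac{\pi j(\pi j+t)}{k^2},
\]
where the last equality uses the sign analysis above. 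Combining with the trivial denominator bound $1-2\gamma\cos\theta+\gamma^2\leq(1+\gamma)^2\leq 4$, I obtain
\[
\frac{1-2\gamma\cos(\theta+\phi)+\gamma^2}{1-2\gamma\cos\theta+\gamma^2}\;\geq\;1+\gamma c_0^2\,\frac{\pi j(\pi j+t)}{k^2}.
\]
A final adjustment of the constant, using $(1+Ca)^2\leq 1+3Ca$ when $Ca\leq 1$ (valid since $\pi j(\pi j+t)/k^2$ is bounded uniformly for $|j|\leq k/2$, $|t|\leq \pi$), produces the stated estimate with $C=C(\gamma,k)>0$ depending only on $\gamma$ and on the bounded quantity $c_0$.

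The main (modest) obstacle is the uniformity at the edge cases $|j|$ close to $k/2$: there $\theta+\phi/2$ can exceed $\pi/2$ by up to $\pi/k$, so the naive Jordan inequality on $[0,\pi/2]$ is not quite enough. Using $k\geq 3$ to confine $\theta+\phi/2$ to $[0,2\pi/3]$, where $\sin x/x$ is still bounded below by a positive absolute constant, handles this. Everything else is routine trigonometric bookkeeping.
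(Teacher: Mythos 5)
Your proof is correct and follows essentially the same route as the paper: square the moduli, rewrite the difference of cosines via the sum-to-product identity, do the sign analysis, and then invoke a linear lower bound $\sin x \geq c_0 x$ on a bounded symmetric interval. One small arithmetic slip: for $k\geq 3$ the angle $|\theta+\phi/2|\leq \pi/2+\pi/k$ is bounded by $5\pi/6$, not $2\pi/3$ (your inequality $\pi/2+\pi/k\leq 2\pi/3$ needs $k\geq 6$); this is harmless, since $\sin x/x$ is still bounded below by a positive constant on $[0,5\pi/6]$ (the paper uses exactly $[-5\pi/6,5\pi/6]$). You are in fact slightly more careful than the paper at the final square-root step, explicitly converting the bound on the squared ratio into a bound on the ratio itself via the boundedness of $\pi j(\pi j+t)/k^2$.
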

\begin{proof}
First, note that there exists a constant $\ti{C}$ such that as long as $\theta \in [-\frac{ 5 \pi}{6}, \frac{5\pi}{6}]$
\beq \label{angle-bound}
|\sin(\theta)| \geq \ti{C} |\theta|.
\eeq
Now, 
\beq
\begin{split}
|1-&\gamma e^{it/k}|=1+\gamma ^2 -2 \gamma \cos\left( \frac{t}{k}\right),\\
|1-&\omega^j_k \gamma e^{it/k}|=1+\gamma ^2 -2 \gamma \cos \left(\frac{t+2\pi j}{k}\right),
\end{split}
\eeq 
so
\beq \label{eq:prop1claima}
\left|\frac{1-\gamma e^{it/k}}{1-\omega^j_k \gamma e^{it/k}}\right|=
\frac{1}{1+\frac{2\gamma (\cos  \frac{t}{k}-\cos \frac{t+2\pi j}{k})}{1+\gamma^2-2\gamma \cos \frac{t}{k}}}\leq \frac{1}{1+\frac{2\gamma}{(1+\gamma)^2} |\cos \frac{t}{k} -\cos\frac{t+2\pi j}{k}|}.
\eeq
Since $k \geq 3$ and $|j|\leq k/2$, it follows that $\frac{t + \pi j}{k}, \frac{\pi j}{k} \in [-\frac{5 \pi}{6}, \frac{5 \pi}{6}]$ so, writing $|\cos \frac{t}{k} -\cos\frac{t+2\pi j}{k}|=2|\sin (\frac{\pi j}{k}) \sin(\frac{t + \pi j}{k})|$, we see we may apply \eqref{angle-bound} to \eqref{eq:prop1claima} to get \eqref{eq:prop1claim}.
\end{proof}

\begin{proof}[Proof of Proposition \ref{prop1}]

First, note that the term $j=0$ and the integral over $\Gamma_0$, in \eqref{kernel3a}, together form $\bbK_{sine,ext}^{\pi/k,\gamma}$. In particular, $\bbK_1^\gamma=\bbK_{sine,ext}^{\pi,\gamma}$. Thus, we need to show that the other terms in the sum go to zero as $S \rightarrow \infty$.

First, write
\beq \label{eq:proofofsine}
\begin{split}
& \left|\bbK_k^\gamma (S+s_1,x_1;S+s_2,x_2)-\bbK_{sine,ext}^{\pi/k,\gamma} (s_1,x_1;s_2,x_2)\right|\\
&\leq C(s_1,s_2)\sum_{j=1}^{k-1}\frac{1}{2\pi} \int_{-\pi}^\pi  \left|\frac{1-\gamma e^{it/k}}{1-\omega_k^j \gamma e^{it/k}}\right|^S \ d t \equiv C(s_1,s_2) F(k,S) \\
\end{split}
\eeq
where $C(s_1,s_2)>0$ is a constant. 

For $k=2$ we get
\beq \no
\begin{split}
F(2,S)&=\int_{-\pi}^\pi  \left|\frac{1-\gamma e^{it/2}}{1+\gamma e^{it/2}}\right|^S \ d t \\
& \leq \int_{-\pi}^\pi \frac{dt}{\left(1+\frac{2\gamma}{(1+\gamma)^2} \left | 2 \cos \frac{t}{2} \right | \right)^S} \rightarrow 0
\end{split}
\eeq
as $S \rightarrow \infty$.

For $k >2$ we write
\beq \label{k>2}
\begin{split}
F(k,S) &=\sum_{j=1}^{k_1}\frac{1}{2\pi} \int_{-\pi}^\pi  \left|\frac{1-\gamma e^{it/k}}{1-\omega_k^j \gamma e^{it/k}}\right|^S \ d t
+\sum_{j=-k_2}^{-1}\frac{1}{2\pi} \int_{-\pi}^\pi  \left|\frac{1-\gamma e^{it/k}}{1-\omega_k^j \gamma e^{it/k}}\right|^S \ d t\\\
\end{split}
\eeq
where $k_1$ is the integer part of  $(k-1)/2$ and $k_2=k-1-k_1$, so both $k_1 \leq k/2$ and $k_2 \leq k/2$. By \eqref{eq:prop1claim} we have
\beq \label{est-k>2}
\frac{1}{2\pi} \int_{-\pi}^\pi  \left|\frac{1-\gamma e^{it/k}}{1-\omega_k^j \gamma e^{it/k}}\right|^S \ d t
 \leq  \frac{1}{2\pi}\int_{-\pi}^\pi  \left| \frac{1}{1+\frac{C \pi j(\pi j+t)}{k^2}}\right|^S \ d t.
\eeq
For $j\neq \pm 1$ the right-hand side converges exponentially fast to zero. The terms with $j=\pm1 $ are $\mathcal O(S^{-1})$ as $S\to \infty$. In particular,  the right-hand side of \eqref{eq:proofofsine} tends to zero as $S\to \infty$ and the statement follows.
 \end{proof}
 
 \subsection{Proof of Proposition \ref{prop2}}
 \begin{proof}[Proof of Proposition \ref{prop2}]
First note that by the mean value theorem we have
 \beq
 \begin{split}
\int_{e^{-\pi i/k}}^{e^{\pi i/k}} \frac{\prod_{r=1}^{s_2}{(1-\gamma_r z)}}{\prod_{t=1}^{s_1}{(1-\omega_k^j \gamma_t z)}} \frac{dz}{z^{x_1-x_2+1}}
=\frac{2 \pi i}{k}  
 \frac{\prod_{r=1}^{s_2}{(1-\gamma_r e^{i \theta_j})}}{\prod_{t=1}^{s_1}{(1-\omega_k^j \gamma_t e^{i \theta_j} )}} {e^{-i \theta_j(x_1-x_2)}}
\end{split}
\eeq
for some $\theta_j\in [-\pi/k,\pi/k]$. Note that $\theta_j\to 0$  as $k\to \infty$ uniformly in $j$, hence
\beq 
\begin{split}
\lim_{k\to \infty} 
\sum_{j=0}^{k-1}\omega_k^{-j x_1} \int_{e^{-\pi i/k}}^{e^{\pi i/k}} \frac{\prod_{r=1}^{s_2}{(1-\gamma_r z)}}{\prod_{t=1}^{s_1}{(1-\omega_k^j \gamma_t z)}} \frac{dz}{z^{x_1-x_2+1}}\\=
\lim_{k\to \infty}\frac{2 \pi i}{k}
\sum_{j=0}^{k-1} \omega_k^{-j x_1}  \frac{\prod_{r=1}^{s_2}{(1-\gamma_r )}}{\prod_{t=1}^{s_1}{(1-\omega_k^j \gamma_t )}}.
\end{split}
\eeq
The sum on the right-hand side is a Riemann sum and hence
\beq 
\begin{split}
\lim_{k\to \infty} 
\sum_{j=0}^{k-1}& \frac{\omega_k^{-j x_1}}{2\pi i} \int_{e^{-\pi i/k}}^{e^{\pi i/k}} \frac{\prod_{r=1}^{s_2}{(1-\gamma_r z)}}{\prod_{t=1}^{s_1}{(1-\omega_k^j \gamma_t z)}} \frac{dz}{z^{x_1-x_2+1}}\\
&=
\frac{1}{2\pi i}\int_{\partial \mathbb D} \frac{\prod_{r=1}^{s_2}{(1-\gamma_r )}}{\prod_{t=1}^{s_1}{(1-\gamma_{t} w)}} \ \frac{dw}{w^{x_1+1}}.
\end{split}
\eeq 
But then by conjugating the kernel we can rewrite this as
\beq
\begin{split}
\lim_{k\to \infty}&\, \frac{ \prod_{r=1}^{s_1}{(1-\gamma_r )}}
{\prod_{r=1}^{s_2}{(1-\gamma_r )}} \, \bbK_k^\gamma (s_1,x_1;s_2,x_2)
\\&=-{{\mathbb 1}_{s_1>s_2}  \prod_{j=s_2+1}^{s_1} (1-\gamma_j)}\frac{1}{2\pi i} \oint_{\partial \mathbb D}\, \prod_{j=s_2+1}^{s_1} (1-\gamma_jw)^{-1} \frac{dw}{w^{x_1-x_2+1}}\\
&\qquad \qquad+{\prod_{r=1}^{s_1}{(1-\gamma_r )}\frac{1}{2\pi i}\oint_{\partial \mathbb D} }\, {\prod_{t=1}^{s_1}{(1-\gamma_t w)^{-1}}} \ \frac{dw}{w^{x_1+1}}.
\end{split}
\eeq
Which means that  
\beq \label{eq:predetmanip}
\begin{split}
&\lim_{k\to \infty} \, \frac{ \prod_{r=1}^{s_1}{(1-\gamma_r )}}
{\prod_{r=1}^{s_2 }{(1-\gamma_r )}} \, \bbK_k^\gamma(s_1,x_1;s_2,x_2)
\\&\ =-{\mathbb 1}_{s_1>s_2}  \textrm{Prob}(W(s_1)=x_1\  \mid  \ W(s_2)=x_2)
+\textrm{Prob}(W(s_1)=x_1).
\end{split}
\eeq
where $W(s)$ is a random walk starting from $0$ and having geometrically distributed transition probabilities. 

Note that if we view the right-hand side of \eqref{eq:predetmanip} as the entries of the matrix, we have the sum of a lower triangular matrix with zeros on the diagonal and a rank $1$ matrix for which all columns are the same. Determinants of such matrices are easily computed.  Indeed, by subtracting the last column  from all the others the determinant does not change. The result is a matrix that is the sum of a lower triangular matrix with trivial main diagonal and a matrix for which only the last column is non-trivial. The determinant is then simply the product of the top right entry and the terms on the first subdiagonal under the main diagonal. This leads to  
\beq
\label{eq:detmanip}
\begin{split}
\lim_{k\to \infty}& \det \left( \bbK_k^\gamma(s_j,x_j;s_l,x_l) \right)_{j,l=1}^n\\ 
&= \textrm{Prob}(W(s_1)=x_1) \prod_{j=1}^{n-1} \textrm{Prob}(W(s_j)=x_j \mid W(s_{j-1}=x_{j-1})).
\end{split} \eeq
Hence, the process converges to a single discrete random walk starting from $0$ and having geometrically distributed transition probabilities.
 \end{proof}

 \subsection{Proof of Proposition \ref{prop3}}
 \begin{proof}[Proof of Proposition \ref{prop3}]

 First note that, as in \eqref{k>2}, we can rewrite \eqref{kernel3a} as
 \beq 
 \begin{split}
\bbK_k^\gamma & (s,x_1;s,x_2)
=\sum_{j=-k_1}^{k_2} \frac{\omega_k^{-j x_1}}{2\pi i} \int_{e^{-\pi i/k}}^{e^{\pi i/k}}  \left(\frac{{1-\gamma_r z}}{{1-\omega^j_k \gamma_rz}}\right)^s \frac{dz}{z^{x_1-x_2+1}}.
\end{split}
\eeq
where $k_1$ is the integer part of  $(k-1)/2$ and $k_2=k-1-k_1$. By writing $z=e^{it/k}$ we can further rewrite the kernel as 
\beq \label{eq:prop3kernelrewriting}
\begin{split}
\bbK_k^\gamma (s,x_1;s,x_2)=\sum_{j=-k_1}^{k_2} \frac{\omega^{-j x_1}_k}{k2\pi } \int_{-\pi}^{\pi} \left(\frac{1-\gamma e^{it/k}}{1-\omega^j_k \gamma e^{it/k}}\right)^{s}  e^{i t(x_2-x_1)/k} d t.\end{split}
\eeq
Now set $s=\sigma k^2$ for large $k$. In case $|j|\geq k^{1/4}$, the estimate \eqref{est-k>2} from the proof of Proposition \ref{prop1} shows that the terms are exponentially small in $k$ as $k\to \infty$. 

If $|j|\leq k^{1/4}$, we write
\beq \no
\log\left(\frac{1-\gamma e^{it/k}}{1-\gamma e^{i(t+2  \pi j )i/k}}\right)=
\log\left(1-\frac{\gamma\left(e^{it/k}-1 \right)}{1-\gamma} \right)-\log\left(1-\frac{\gamma\left(e^{i(t+2 \pi j)/k}-1 \right)}{1-\gamma} \right).
\eeq
Noting
\beq \no
1-\frac{\gamma\left(e^{it/k}-1 \right)}{1-\gamma}=
1-\frac{\gamma\left(\frac{it}{k}+\frac{(it)^2}{k^2}+\mathcal{O}(k^{-3}) \right)}{1-\gamma} 
\eeq
and similarly for $1-\frac{\gamma\left(e^{i(t+2 \pi j)/k}-1 \right)}{1-\gamma}$, and expanding the $\log$ as well, we get
\beq \label{eq:approxintegrandprop3}
 \begin{split}
 \left(\frac{1-\gamma e^{it/k}}{1-\gamma e^{i(t+ 2 \pi j)/k}}\right)^{\sigma k^2}
 =\exp\left(\frac{2 \pi   i \gamma \sigma k}{1-\gamma}j -\pi d j^2 - d j  t  \right) \left(1+\mathcal O(k^{-1/4})\right), 
\end{split}
 \eeq
with $d=\frac{2\pi \gamma \sigma}{(1-\gamma)^2}$, as $k \to \infty$. This approximation is uniform for $|j|\leq k^{1/4}$ and for $t\in [-\pi,\pi]$. 

Substituting \eqref{eq:approxintegrandprop3} in the integral gives 
\beq \label{eq:approxintegralprop3}
\begin{split}
 \int_{-\pi}^{\pi} &\left(\frac{1-\gamma e^{it/k}}{1-\omega^j_k \gamma e^{it/k}}\right)^{\sigma k^2}  e^{i t(x_2-x_1)/k} d t 
 =-\exp\left(\frac{2\pi i \gamma \sigma k}{1-\gamma} j-\pi d j^2 \right) \\
 & \times \frac{e^{- d j \pi} e^{-\pi i (x_1-x_2)/k}- e^{ d j \pi} e^{\pi i (x_1-x_2)/k}}{dj+i(x_1-x_2)/k} \left(1+\mathcal O(k^{-1/4})\right),  \qquad k\to \infty.
 \end{split}
\eeq

Now, in \eqref{eq:prop3kernelrewriting} replace the terms with $|j|\leq k^{1/4}$ by \eqref{eq:approxintegralprop3} and use the fact that the other terms are exponentially small in $k$ to obtain (recall $x_j= \textrm{integer part of  } \Big(\sigma k^2 \gamma/(1-\gamma) +k \eta_j\big)$) 
\beq 
\begin{split}
\lim_{k\to \infty} &
k\, \bbK^\gamma_k (s_1,x_1;s_2,x_2)=
 -\frac{1}{2\pi }\sum_{j=-\infty}^\infty  \exp(- 2\pi i \eta_1 j- \pi d  j^2  )\\
&\left(\frac{\exp\left(-\pi(d j + i(\eta_1-\eta_2))\right)
-\exp\left(\pi(d j + i(\eta_1-\eta_2))\right)}{dj+ i (\eta_1-\eta_2) }\right).
 \end{split}
 \eeq 
In order to show that this is \eqref{kernelJoh} we first split the sum into two separate sums 
\beq 
\begin{split}
\lim_{k\to \infty}&
k\, \bbK_k^{\gamma} (s_1,x_1;s_2,x_2)=\\
&-\frac{1}{2\pi }\sum_{j=-\infty}^\infty  \exp(- \pi d  j(j+1))
\left(\frac{\exp(- 2\pi i \eta_1 j) \exp\left(-\pi i(\eta_1-\eta_2)\right)}{dj+i (\eta_1-\eta_2)}\right)\\
&+\frac{1}{2\pi }\sum_{j=-\infty}^\infty  \exp(- \pi d j(j-1))
\left(\frac{ \exp(-  2\pi i \eta_1 j) \exp\left(\pi i(\eta_1-\eta_2)\right)}{dj+i (\eta_1-\eta_2)}\right).
 \end{split}
 \eeq 
By changing $j\mapsto -j$ in the first  sum we obtain
\beq 
\begin{split}
\lim_{k\to \infty}
& k\, \bbK_k^{\gamma} (s_1,x_1;s_2,x_2)=\\
&\frac{1}{2\pi }\sum_{j=-\infty}^\infty   \exp(- \pi d  j(j-1))
\left(\frac{\exp( 2\pi i \eta_1 j) \exp\left(-\pi i(\eta_1-\eta_2)\right)}{dj-i (\eta_1-\eta_2)}\right)\\
&+\frac{1}{2\pi }\sum_{j=-\infty}^\infty  \exp(- \pi d j(j-1))
\left(\frac{ \exp(-2\pi i \eta_1 j) \exp\left(\pi i(\eta_1-\eta_2)\right)}{dj+i (\eta_1-\eta_2)}\right)
 \end{split}
 \eeq
 and this may be written as
 \beq 
\begin{split}
\lim_{k\to \infty}
&k \bbK_k^{\gamma} (s_1,x_1;s_2,x_2)\\=
&\frac{1}{\pi} \sum_{j=-\infty}^\infty   \exp(- \pi d j(j-1))
\Re \left(\frac{ \exp\left(\pi i((2j-1)\eta_1+\eta_2)\right)}{i (\eta_2-\eta_1)+dj}\right), \end{split}
 \eeq 
 which is the statement.
 \end{proof}


\begin{thebibliography}{99}

\bibitem{BKMM}  J.~Baik, T.~Kriechenbauer, K.~T.-R.~McLaughlin and P.~D.~Miller, \emph{Discrete Orthogonal Polynomials},   Annals of Mathematics Studies, 164. Princeton University Press, Princeton, NJ, 2007. 
		
 \bibitem{Bor} A.~Borodin, \emph{Periodic Schur Process and Cylindric Partitions}, Duke Math.\ Jour.\  {\bf 10}  (2007), no.\ 4, 1119--1178.
  
 \bibitem{BorDet} A.~Borodin, \emph{Determinantal point processes},  In: Oxford Handbook on Random Matrix theory, edited by Akemann G.; Baik, J. ; Di Francesco P., Oxford University Press, 2011. (arXiv:0911.1153) 
 
\bibitem{BGR} A.~Borodin, V.~Gorin and E.~Rains, \emph{$q$-distributions on boxed plane partitions},   Selecta Math.\ (N.S.) {\bf 16} (2010), no.\ 4, 731--789.



\bibitem{BH}  E.~Br\'ezin and S.~Hikami, \emph{Correlations of nearby levels induced by a random potential}, Nuclear Phys.\ B {\bf 479} (1996), no.\ 3, 697--706. 

\bibitem{dyson} F.~Dyson, \emph{A Brownian-motion model for the eigenvalues of a random matrix}, J.\ Math.\ Phys.\ {\bf 3} (1962), 1191--1198.

\bibitem{Erd} L.~Erd\H os, B.~Schlein, H.-T.~Yau and J.~Yin, \emph{The local relaxation flow approach to universality of the local statistics for random matrices}, to appear in Annales Inst.\ H.\ Poincar\'e (B), Probability and Statistics, preprint arXiv:0911.3687.

\bibitem{Forrester} P.~Forrester, \emph{Log-gases and Random Matrices}, London Mathematical Society Monographs Series, 34. Princeton University Press, Princeton, NJ, 2010. 

\bibitem{grabiner} D.~J.~Grabiner,  \emph{Brownian motion in a Weyl chamber, non-colliding particles, and random matrices},
Ann. Inst. H. Poincar\'e Probab.\ Statist.\ {\bf 35} (1999), no.\ 2, 177--204. 

\bibitem{HKPV} J.~B.~Hough, M.~Krishnapur, Y.~Peres and B.~Vir\'ag, \emph{Determinantal processes and independence}, Prob.\ Surv.\ {\bf 3} (2006), 206--229.

\bibitem{ImSa} T. Imamura and T. Sasamoto, \emph{Correlation function of the Schur process with a fixed final partition}, J. Math. Phys. {\bf 49} (2008), no.\ 5, 053302, 20 pp.

\bibitem{J4} K.~Johansson, \emph{Universality of the local spacing distribution in certain ensembles of Hermitian Wigner Matrices}, Commun.\ Math.\ Phys.\ {\bf 215} (2001), 683--705.

\bibitem{J2} K.~Johansson, \emph{Non-intersecting paths, random tilings and random matrices},  Probab.\ Theory Related Fields {\bf 123} (2002), no.\ 2, 225--280.

\bibitem{J3} K.~Johansson, \emph{Determinantal processes with number variance saturation},  2001
Commun.\ Math.\ Phys.\ {\bf 252} (2004), no. 1-3, 111--148. 

\bibitem{J} K.~Johansson, \emph{Random matrices and determinantal processes},  Mathematical Statistical Physics, Elsevier B.V.\ Amsterdam (2006) 1--55.

\bibitem{K} W.~K\"onig, \emph{Orthogonal polynomial ensembles in probability theory}, Probab.\ Surveys {\bf 2} (2005), 385--447.

\bibitem{L} R.~Lyons, \emph{Determinantal probability measures}, Publ.\ Math.\ Inst.\ Hautes Etudes Sci.\ {\bf 98}  (2003), 167--212.

\bibitem{Ok1} A.~Okounkov, \emph{Infinite wedge and random partitions}, Selecta Math.\ (N.S.) {\bf 7} (2001),
57--81.

\bibitem{O} A.~Okounkov, \emph{Symmetric functions and random partitions}, Symmetric functions 2001: surveys of developments and perspectives, 223--252, NATO Sci.\ Ser.\ II Math.\ Phys.\ Chem., 74, Kluwer Acad.\ Publ., Dordrecht, 2002.

\bibitem{OR} A.~Okounkov and N.~Reshetikhin,   \emph{Correlation function of Schur process with application to local geometry of a random 3-dimensional Young diagram.} J.\ Amer.\ Math.\ Soc.\ {\bf 16} (2003), no.\ 3, 581--603 (electronic).

\bibitem{Sosh} A.~Soshnikov, \emph{Determinantal random point fields},  Uspekhi Mat.\ Nauk {\bf 55} (2000), no.\ 5 (335), 107--160; translation in Russian Math.\ Surveys {\bf 55} (2000), no.\ 5, 923--975.

\bibitem{Sosh2} A. Soshnikov, \emph{Determinantal random point fields,}  in: Encyclopedia of Mathematical Physics, 47--53. Oxford: Elsevier, 2006.

\bibitem{Stem} J. R. Stembridge, \emph{Nonintersecting Paths, Pfaffians, and plane partitions,} Adv.\ in Math.,  {\bf 83} (1990), 96--131.
\end{thebibliography}
\end{document}